\newcommand{\labeltext}[2]{%
	\@bsphack
	\def\@currentlabel{#1}{\label{#2}}%
	\@esphack
}
\long\def\unmarkedfootnote#1{{\long\def\@makefntext##1{##1}\footnotetext{#1}}}
\theoremstyle{plain}
\newtheorem{thm}{Theorem}[section]
\newtheorem{lemma}[thm]{Lemma}
\newtheorem{Problem}[thm]{Problem}
\newtheorem{prop}[thm]{Proposition}
\newtoks\prt
\theoremstyle{definition}
\newtheorem{remark}[thm]{Remark}
\newtheorem{definition}[thm]{Definition}
\newtheorem{example}[thm]{Example}
\def\eqn#1$$#2$${\begin{equation}\label#1#2\end{equation}}
\numberwithin{equation}{section}
\def\D{{\mathcal D}}
\def\diam{\operatorname{diam}}
\def\dist{\operatorname{dist}}
\def\epsilon{\varepsilon}
\def\en{\mathbb N}
\def\er{\mathbb R}
\def\G{\mathcal{G}}
\def\I{\mathcal{F}_p}
\def\Is{\overline{\mathcal{F}_p}^{|\cdot|}}
\def\Iw{\overline{\mathcal{F}_p}^{\rm{w}}}
\def\l{\left}
\def\leqslant{\leq}
\def\mir1{\mathcal L_1}
\def\oint{-\hskip -13pt \int}
\def\phi{\varphi}
\def\sto{\rightrightarrows}
\def\tepsilon{\tilde{\epsilon}}
\newcommand{\RR}{\mathbb{R}}
\newcommand{\N}{\mathbb{N}}
\newtoks\by
\newtoks\paper
\newtoks\book
\newtoks\jour
\newtoks\yr
\newtoks\pages
\newtoks\vol
\newtoks\publ
\def\ota{{\hbox\vol{???}}}
\def\cLear{\by=\ota\paper=\ota\book=\ota\jour=\ota\yr=\ota
\pages=\ota\vol=\ota\publ=\ota}
\def\endpaper{\the\by, {\the\paper},
\textit{\the\jour} \textbf{\the\vol} (\the\yr), \the\pages.\cLear}
\def\endbook{\the\by, \textit{\the\book}, \the\publ.\cLear}
\def\endprep{\the\by, \textit{\the\paper}, \the\jour.\cLear}
\def\endyearprep{\the\by, \textit{\the\paper}, \the\jour, (\the\yr).\cLear}
\def\name#1#2{#2 #1}
\definecolor{dan}{RGB}{120, 0 ,120} 
\newcommand\EEE{\color{black}}
\newcommand{\rone}{\color{black}}
\newcommand{\dc}{\color{dan}}
\def\H{\mathcal{H}}
\def\de0#1{\rule[3pt]{#1}{0.4pt} \hspace{-0.1pt} \rule[3.05pt]{0.05pt}{0.4pt} \hspace{-0.1pt} \rule[3.1pt]{0.05pt}{0.4pt} \hspace{-0.1pt} \rule[3.15pt]{0.05pt}{0.4pt} \hspace{-0.1pt} \rule[3.2pt]{0.05pt}{0.4pt} \hspace{-0.1pt} \rule[3.25pt]{0.05pt}{0.4pt} \hspace{-0.1pt} \rule[3.3pt]{0.05pt}{0.4pt} \hspace{-0.1pt} \rule[3.35pt]{0.05pt}{0.4pt} \hspace{-0.1pt} \rule[3.4pt]{0.05pt}{0.4pt} \hspace{-0.1pt} \rule[3.45pt]{0.05pt}{0.4pt} \hspace{-0.1pt} \rule[3.5pt]{0.05pt}{0.4pt} \hspace{-0.1pt} \rule[3.55pt]{0.05pt}{0.4pt} \hspace{-0.1pt} \rule[3.6pt]{0.05pt}{0.4pt} \hspace{-0.1pt} \rule[3.65pt]{0.05pt}{0.4pt} \hspace{-0.1pt} \rule[3.7pt]{0.05pt}{0.4pt} \hspace{-0.1pt} \rule[3.75pt]{0.05pt}{0.4pt} \hspace{-0.1pt} \rule[3.8pt]{0.05pt}{0.4pt} \hspace{-0.1pt} \rule[3.85pt]{0.05pt}{0.4pt} \hspace{-0.1pt} \rule[3.9pt]{0.05pt}{0.4pt} \hspace{-0.1pt} \rule[3.95pt]{0.05pt}{0.4pt} \hspace{-0.1pt} \rule[4.0pt]{0.05pt}{0.4pt} \hspace{-0.1pt} \rule[4.05pt]{0.05pt}{0.4pt} \hspace{-0.1pt} \rule[4.1pt]{0.05pt}{0.4pt} \hspace{-0.1pt} \rule[4.15pt]{0.05pt}{0.4pt} \hspace{-0.1pt} \rule[4.2pt]{0.05pt}{0.4pt} \hspace{-0.1pt} \rule[4.25pt]{0.05pt}{0.4pt} \hspace{-0.1pt} \rule[4.3pt]{0.05pt}{0.4pt} \hspace{-0.1pt} \rule[4.35pt]{0.05pt}{0.4pt} \hspace{-0.1pt} \rule[4.4pt]{0.05pt}{0.4pt} \hspace{-0.1pt} \rule[4.45pt]{0.05pt}{0.4pt} \hspace{-0.1pt} \rule[4.5pt]{0.05pt}{0.4pt} \hspace{-0.1pt} \rule[4.55pt]{0.05pt}{0.4pt} \hspace{-0.1pt} \rule[4.6pt]{0.05pt}{0.4pt} \hspace{-0.1pt} \rule[4.65pt]{0.05pt}{0.4pt} \hspace{-0.1pt} \rule[4.7pt]{0.05pt}{0.4pt} \hspace{-0.1pt} \rule[4.75pt]{0.05pt}{0.4pt} \hspace{-0.1pt} \rule[4.8pt]{0.05pt}{0.4pt} \hspace{-0.1pt} \rule[4.85pt]{0.05pt}{0.4pt} \hspace{-0.1pt} \rule[4.9pt]{0.05pt}{0.4pt} \hspace{-0.1pt} \rule[4.95pt]{0.05pt}{0.4pt} \hspace{-0.1pt} \rule[5.0pt]{0.05pt}{0.4pt} \hspace{-0.1pt} \rule[5.05pt]{0.05pt}{0.4pt} \hspace{-0.1pt} \rule[5.1pt]{0.05pt}{0.4pt} \hspace{-0.1pt} \rule[5.15pt]{0.05pt}{0.4pt} \hspace{-0.1pt} \rule[5.2pt]{0.05pt}{0.4pt} \hspace{-0.1pt} \rule[5.25pt]{0.05pt}{0.4pt} \hspace{-0.1pt} \rule[5.3pt]{0.05pt}{0.4pt} \hspace{-0.1pt} \rule[5.35pt]{0.05pt}{0.4pt} \hspace{-0.1pt} \rule[5.4pt]{0.05pt}{0.4pt} \hspace{-0.1pt} \rule[5.45pt]{0.05pt}{0.4pt} \hspace{-0.1pt} \rule[5.5pt]{0.05pt}{0.4pt} \hspace{-0.1pt} \rule[5.55pt]{0.05pt}{0.4pt} \hspace{-0.1pt} \rule[5.6pt]{0.05pt}{0.4pt} \hspace{-0.1pt} \rule[5.65pt]{0.05pt}{0.4pt} \hspace{-0.1pt} \rule[5.7pt]{0.05pt}{0.4pt} \hspace{-0.1pt} \rule[5.75pt]{0.05pt}{0.4pt} \hspace{-0.1pt} \rule[5.8pt]{0.05pt}{0.4pt} \hspace{-0.1pt} \rule[5.85pt]{0.05pt}{0.4pt} \hspace{-0.1pt} \rule[5.9pt]{0.05pt}{0.4pt} \hspace{-0.1pt} \rule[5.95pt]{0.05pt}{0.4pt} \hspace{-0.1pt} \rule[6.0pt]{0.05pt}{0.4pt}}	
\def\deb{\mathop{\de0{16pt} ~}\limits}		
\def\debst{\mathop{\hspace{5.2pt} ^* \hspace{-11.2pt} \de0{16pt}~}\limits}	
\title{Non-interpenetration  of  rods derived  by $\Gamma$-limits}
\author{Barbora Bene\v{s}ov\'a, Daniel Campbell, Stanislav Hencl}
\address{Department of Mathematical Analysis, Faculty of Mathematics and Physics,  Charles University,
So\-ko\-lovsk\'a 83, 186~00 Prague 8, Czechia}
\email{\tt \{benesova, campbell, hencl\} @karlin.mff.cuni.cz}
\author{Martin Kru\v{z}\'ik}
\address{ Czech Academy of Sciences, Institute of Information Theory and Automation, Pod vod\'arenskou v\v{e}\v{z}\'i 4, 182~00 Prague 8, Czechia, and  Department of Physics, Faculty of Civil Engineering, Czech Technical University, Th\'{a}kurova 7, 166~29 Prague 6, Czechia   }
\email{\tt kruzik@utia.cas.cz}
\thanks{Mathematics Subject Classification (2020): 49J45, 35Q74, 74B20\\
This work has been supported by the Czech Science Foundation  through the grants 23-04766S (BB \& MK) and   21-01976S  (DC \& SH) as well as by the Ministry of Education, Youth and Sport of the Czech Republic through the grant LL2105 CONTACT (BB), and 8J22AT017 (MK)}
\date{\today}
\begin{document}

\begin{abstract}
Ensuring non-interpenetration of matter is a fundamental prerequisite when modeling the deformation response of solid materials.  In this contribution, we thoroughly examine how this requirement, equivalent to the injectivity of deformations within  bulk structures, manifests itself in dimensional-reduction problems. Specifically, we focus on the case of rods embedded in a two-dimensional plane. Our results focus on  $\Gamma$-limits of energy functionals that enforce an admissible deformation to be a homeomorphism.  These $\Gamma$-limits are evaluated along a passage from the bulk configuration to the rod arrangement. The proofs rely on the equivalence between the weak and strong closures of the set of homeomorphisms from $\mathbb{R}$ to $\mathbb{R}^2$, a result that is of independent interest and that we establish in this paper, too.

\end{abstract}

\maketitle
\section{Introduction}
Mathematical modeling of (Cauchy) elastic solids aims at predicting the shape of a specimen under the action of applied external forces or boundary conditions.  Hyperelasticity additionally assumes that the first Piola-Kirchhoff stress tensor $T$  has a potential, and so it emphasizes the conservative/nondissipative character of elasticity.  This potential $W$ has a physical meaning of the volume density of energy  stored in the elastic body. In the simplest setting, the stored energy depends only on the gradient of the     \emph{deformation} function $y: \bar \Omega \to \mathbb{R}^n$, where $\Omega\subset\mathbb{R}^n$, {\color{black} with $n=3$ or $n=2$},  is a  reference configuration, i.e., the set spanned by the specimen without applied loads. Typically, it is assumed to be stressless and of zero stored  energy,  i.e., $T=0$ and $W=0$ if $y$ is the identity map.  The formula
\begin{align}\label{PK-ddef}
T(x)=\frac{\partial W(\nabla y(x))}{\partial F}
\end{align}
introduces the relationship  of all mentioned quantities for $x\in\Omega$.  Here $F$ stands for a placeholder  of $\nabla y$. As with any potential, $W$ can only be identified up to an additive constant and it is typically assumed that $W\ge 0$. 

Given the hyperelasticity assumption, the \emph{stable states} of the elastic body are the minimizers of the stored energy functional:
$$
y\mapsto\int_\Omega W(\nabla y) \mathrm{d}x -L(y)
$$
over the set of admissible deformations $\mathcal{A}$. Above, $L$ denotes a functional representing work of external loads.  In many occasions, it is considered linear and continuous in the topology of deformations. 

To ensure that the model is physically sound, several restrictions  must be imposed on the stored energy density $W$ and the set of deformations. Standard properties that can be found, e.g., in the book by Ciarlet \cite{ciarlet}, include that every element in $\mathcal{A}$ is continuous, \emph{injective} and orientation-preserving.  Given that the reference configuration is essentially arbitrary, it also makes sense to require that the inverse of a physical  deformations is also continuous. 
To fix terms,  we define   the set of admissible deformations as 
\begin{align}\label{setA}
\mathcal{A} = \{y: \Omega \to \mathbb{R}^n: y \in W^{1,p}(\Omega; \mathbb{R}^n), y \text{ is a homeomorphism  and  } \mathrm{det}\nabla y > 0 \text{ a.e.~in } \Omega\},\end{align}
 and $p>n$.  Here $W^{1,p}(\Omega;\mathbb{R}^n)$ stands for the usual Sobolev space and the requirement that $\det\nabla y>0$  ensures orientation-preservation.  For the energy density  $W$,  we further impose frame-indifference,  i.e.,  
$$
W(QF) = W(F) \quad \text{for all } Q \in \mathrm{SO}(n) \text{ and } F \in \mathbb{R}^{n\times n}.
$$
It is also natural to impose a blow-up of the energy when  $F$ is approaching infinite extension or compression via
$$
W(F) \to \infty \quad \text{ when } |F| \to \infty \text{ or } \mathrm{det}\,F \to 0.
$$

We wish to emphasize that injectivity is intimately related to non-interpenetration of matter and is thus of the uttermost importance for physical processes. Indeed, an intact material can never undergo a deformation that would map two points from the reference configuration to the same place.

 Although it is important from the modeling point of view, this condition is often disregarded in the mathematical literature. This is because many difficult, and to-date open, questions; see e.g. \cite{ballOpen, sirev, kruzik-roubicek}  are connected with   the constraint on injectivity in variational problems. This is related not only to static problems but also quasistatic and dynamic {\color{black} ones} \cite{kruzik-roubicek} because these are often analyzed by means of time-discrete variational schemes.   We highlight particularly that it is largely unknown whether such functions can be approximated by smooth injective ones or what are traces of maps from  $\mathcal{A}$ on $\partial\Omega$ although partial progress has been achieved in the last years. We refer  e.g. to \cite{CH,IKO, HP}. 

The noninterpretation of the matter seems to be almost completely unexplored in \emph{dimension reduction} which is used to deduce models for a specimen {\color{black} for which} one dimension becomes negligible compared to other ones. This includes  thin films in  the three dimensional space or rods in the plane. We give more background on how a dimension reduction is usually performed in Section \ref{sec:dim-red}, however the essence is to perform an appropriate limit when starting from a bulk model. Up until today, to the best of the authors' knowledge, no such limit has been performed \emph{once starting from a bulk model fully enforcing injectivity} (and thus ensuring non-interpenetration of matter) neither in dimension $n=2$ or $n=3$ unless the limiting model is near to rigid; cf.~\cite{bresciani, BrescianiKruzik}. In some cases, we can recover the limit only in some constrained occasions \cite{DKPS}.

On the top of that, there seems {\it not}  to be  a common agreement how to define  {\it non-interpenetration}  for lower dimensional objects. Clearly,  in lower dimensional structures,  non-interpenetration cannot be set equal to injectivity in this geometry as it is easy to imagine admissible deformations violating injectivity like folding a thin sheet of paper. On the other hand, any crossings in the thin film or rod must correspond to interpenetration. 
In this sense, performing rigorously the above-mentioned limit could shed light on this question, as the class of valid, \emph{admissible and thus non-interpenetrative} deformations should also be a result of the limiting process. 

Within this contribution, we restrict our attention to dimension $n=2$ and perform,  to our best knowledge for the first time, a $\Gamma$-limit of bulk energy functionals that take finite values \emph{only} on $\mathcal{A}$ (and are infinite otherwise) when one dimension of the specimen approaches zero. We perform the limit in the \emph{membrane regime} and thus the most flexible and thus challenging one. In doing so, we arrive at energy functionals that only take finite values on the \emph{closure of homeomorphic functions} in $W^{1,p}((0,1); \mathbb{R}^2)$ so that this set can be identified  as the set of non-interpenetrative deformations of rods. This corresponds to the idea of Pantz \cite{pantz} who proposed to take admissible deformations to be the closure of embeddings isotopic to the reference configuration in the bulk or thin film setting but did not perform a $\Gamma$-Limit. Of course, the topology of the closure is important and a crucial step in our proof is to show that the weak and strong closure of injective functions in $W^{1,p}((0,1); \mathbb{R}^2)$ is the same and equals also the one given by the $C^0$-topology. This result can be shown by an adaptation of the technique of {\color{black} De Philippis and Pratelli} \cite{DPP}.

This paper is built up as follows: In Section \ref{sec:dim-red} we start with providing some background information on dimension reduction, in Section \ref{sec:main} we state then our main results that are then proved in the last two sections of the paper.

\section{Background on dimension reduction}
\label{sec:dim-red}
Many objects in mechanics can be treated in lower-dimensional geometry when one dimension of the object is negligible \rone in comparison  with \EEE the other one or two. For such objects, simplified models can be designed that are often simpler to use in computational modeling \cite{Belik-Brule-Luskin-2001, Belik-Luskin-2004, Belik-Luskin-2006, Friedrich-Kruzik-Valdman}.  Analytically, on the static level, this is built up on the fundamental results of Friesecke, James,  and M\"uller  \cite{friesecke.james.mueller1,friesecke.james.mueller2}  that showed that it is possible to obtain such models as $\Gamma$-limits \cite{braides} of bulk models. 

Let us, at this point, give a short overview of the procedure and the results obtained to date {\color{black} by restricting to the case of planar rods}. As we consider a static situation here (and this is also the setting of \cite{friesecke.james.mueller1,friesecke.james.mueller2}), the analysis starts from a bulk energy of the form
\begin{equation}
 \label{eq-dimRed1}   
 {\color{black} \tilde{y_h} \mapsto \int_{\Omega_h} W(\nabla\tilde{y_h}(\tilde{x})) \mathrm{d}\tilde{x} \qquad \text{ with } \Omega_h:=(0,1)\times(-h/2,h/2).}
\end{equation}

\rone 
 It is more convenient to work on a fixed domain {\color{black} $\Omega:=(0,1)\times(-1/2,1/2)$}. We perform the change of variables
\begin{align}\label{phi}\phi_h:\Omega\to\Omega_h,\quad\phi_h(x):=(x_1,hx_2)\quad {\color{black} \text{ for }(x_1,x_2) \in (0,1) \times (-1/2,1/2).}\end{align}
Setting $y_h:=\tilde y_h\circ \phi_h$, and 
\begin{align}\label{nablah} \nabla_h v :=\Big(\partial_1v  \big|\frac{\partial_2 v }{h}\Big)\end{align} for every  $v\in W^{1,p}(\Omega;\mathbb{R}^2)$  
  we see that \eqref{eq-dimRed1} takes the form \
\begin{equation}
     \label{eq-dimRed2} 
     \mathcal{E}_h(y) = h \int_{\Omega} W(\nabla_h y_h( x)) \mathrm{d}{x}.\EEE 
\end{equation}
\EEE

As the volume of the bulk object vanishes  with the shrinking thickness, it is natural that the same happens for its energy \eqref{eq-dimRed1}, which is even more apparent in \eqref{eq-dimRed2}.  Hence, we rescale   the energy $\mathcal{E}_h$ just to compensate for it. This scaling  is the so-called   \emph{membrane} regime in which one considers the limit $\mathcal{E}_h/h$ and thus only balances  the vanishing volume.  This allows the incorporation of stretching or rotations of the midplane. \rone The rigorous derivation of the membrane regime for three dimensional energies with polynomial growth is due to LeDret and Raoult in \cite{LeDret-Raoult}. \EEE When taking $\alpha > 1$ and considering $\mathcal{E}_h/h^\alpha$ one obtains a whole hierarchy of more restrictive models corresponding to non-linear bending {\color{black} von K\'arm\'an type theories }(see \cite{friesecke.james.mueller2}).   \rone Hornung \cite{Hornung} studied properties of almost minimizers of $\mathcal{E}_h/h^\alpha$  for $\alpha\ge 3$ and showed that they are invertible almost everywhere possibly apart of a thin boundary layer whose thickness is estimated. \EEE   In this work, however, we stick to the case $\alpha =1$. 

To summarize, the thin-film model shall be obtained as an appropriate limit of $\mathcal{E}_h/h$ as $h \to 0$.  As  we work in a variational setting, the limiting process should also transfer information on minimizers and minimizing sequences, which can be ensured by considering the $\Gamma$-convergence (see, e.g., \cite{braides, dalmaso}). 
 Before we state the definition of the $\Gamma$-convergence tailored to our setting we define for $p\ge 1$ an averaging operator $\pi: L^p(\Omega;\mathbb{R}^2)\to L^p((0,1);\mathbb{R}^2)$ such that for a.e.~$0<t<1$
$$
\pi(u)(t) := \int_{-1/2}^{1/2} u(t,x) \mathrm{d} x.
$$

\begin{definition}
\label{GammaConvDef}
Take  $p \in (1,\infty)$.   We say that a sequence of functionals  $\{J_h\}_h$, where $J_h: W^{1,p}(\Omega;\mathbb{R}^2) \to \mathbb{R}$ $\pi$-$\Gamma$-converges to $J: W^{1,p}((0,1);\mathbb{R}^2) \to \mathbb{R}$ if
\begin{itemize}
    \item for every sequence $\{y_h\}_{h>0} \subset W^{1,p}(\Omega; \mathbb{R}^2)$ such that $\pi(y_h) \to y$ in $L^p((0,1); \mathbb{R}^2)$
    $$
    J(y) \leq \liminf_{h\to 0} J_h(y_h);
    $$
    \item for every $u \in W^{1,p}((0,1); \mathbb{R}^2)$ there exists a sequence of functions $\{u_h^\mathrm{rc}\}_{h>0} \subset  W^{1,p}(\Omega, \mathbb{R}^2)$ such that  $\pi(u_h^\mathrm{rc}) \to u$ in $L^p((0,1); \mathbb{R}^2)$ and 
    $$
 J(u) \geq \limsup_{h\to 0} J_h(u_h^\mathrm{rc}).
$$
\end{itemize}
\end{definition}

Hence,  the definition of $\Gamma$-convergence includes two parts: {\color{black} a liminf-inequality} and the construction of a so-called recovery sequence $u_h^\mathrm{rc}$.   

Under the crucial assumption of $p$-growth (for $p \in (1,\infty)$), i.e., 
$$
C_1(1+|F|^p) \leq W(F) \leq C_2 (1+|F|^p) \qquad \text{ for some } C_1,C_1 > 0,
$$
it is proved in \cite{friesecke.james.mueller1,friesecke.james.mueller2} that $\frac{1}{h}\mathcal{E}_h$ $\pi$-$\Gamma$-converges to $\mathcal{E}$ with
$$
\mathcal{E}(y)  = \int_0^1\mathcal{C}  \min_{\xi \in \mathbb{R}^2} W(y'(x)| \xi) \mathrm{d} x,
$$
\rone 
where $\mathcal{C}\min_{\xi \in \mathbb{R}^2} W(\cdot| \xi) $ denotes the convex envelope  of $\min_{\xi \in \mathbb{R}^2} W(\cdot | \xi) $  which appears in the limit energy. \EEE  Namely,  $\Gamma$-limits are always weakly lower semicontinuous, or, in other words, the $\Gamma$ limiting process always involves a relaxation process, too.  

\begin{remark}[Higher dimensions]
Let us note that the original results in  \cite{friesecke.james.mueller1,friesecke.james.mueller2} hold in a more general setting than presented here. Indeed, besides investigating a larger class of convergence regimes, there the bulk energy was defined for functions in $n=3$; i.e. $\mathcal{E}_h:W^{1,p}(\omega \times (-1/2)\times(1/2); \mathbb{R}^3)$ with $\omega \subset \mathbb{R}^2$. Then the limiting energy differs in the relaxation means; i.e.
$$
\mathcal{E}(y) = \int_\omega Q \min_{\xi \in \mathbb{R}^3} W(\nabla_p y(x) | \xi) \mathrm{d} x,
$$
$Q$ is the quasiconvex hull, that is the largest quasiconvex function (in the sense of Morrey \cite{morrey-orig, morrey}, see also \cite{dacorogna}) below $A \mapsto \min_{\xi \in \mathbb{R}^3} W(A| \xi)$.
\end{remark}

The result has been further generalized by Anza-Hafsa and {\color{black} Mandallena} \cite{AnzaHafsa1} (see also \cite{AnzaHafsa2}), building on the results of Ben Belgacem \cite{belgacem} and most importantly the approximation results of Gromov and Eliashberg \cite{GromovEliasberg} in order to incorporate local injectivity so that they could generalize the growth of $W$ to 
\begin{align*}
    &W(F)=+\infty \text { if and only if } \operatorname{det} F \leqslant 0;\\
    &\text{for every $\delta>0$, there exists $c_\delta>0$ such that for all $F \in \mathbb{R}^{2 \times 2}$, if $\operatorname{det} F \geqslant \delta$ then} \\
    &W(F) \leqslant c_\delta\left(1+|F|^p\right),
\end{align*}
so that the local injectivity and the growth of the energy for infinite compression could be included. Here, again the $\Gamma$-limit is given by $\mathcal{E}$ as above, so remains unchanged. This may seem surprising at first but the lower dimensionality of the object allows to approximate (strongly) any function in $W^{1,p}((0,1);\mathbb{R}^2)$ by locally injective functions (see \cite{GromovEliasberg}) so that, in other words, local injectivity is not a strong restriction in this context.

Imposing global injectivity in dimension reduction, however, stays largely open to date. This issue has been addressed only in a couple of works that obtained partial results. Olbermann and Runa \cite{OlbermannRuna} propose a definition of interpenetration based on the Brouwer degree in the thin film setting. They prove that in several scalings (but not the membrane scaling considered here), interpenetrative deformations in the thin film can only be limits of interpenetrative bulk deformations but do not give a full characterization of the $\Gamma$-Limit. This full $\Gamma$-Limit has been found by Bresciani \cite{bresciani} (see also \cite{BrescianiKruzik}), however only in a very rigid scaling corresponding to the linear von K\'{a}rm\'an plate theory. 

In some other works \cite{MarianoMucci}, non-interpenetration of the thin film is characterized by introducing an ``artificial thickness''  so the bulk theory on non-interpenetration is applicable. In the language of $\Gamma$-convergence this means that a construction of a recovery sequence is prescribed and the deformation of the thin film is said to be non-interpenetrative if this prescription yields an injective bulk deformation.

\section{Main Results and Discussion}
\label{sec:main}
As in Section \ref{sec:dim-red}, we take the reference configuration to be $\Omega_h:=(0,1)\times(-h/2,h/2)$, where $h>0$ denotes the thickness of the specimen.  Moreover,  $\Omega:=\Omega_1$.   We then introduce a family of functionals
$$
I_h(\tilde{y}_h): = 
\begin{cases}
\int_{\Omega_h }W (\nabla \tilde{y}(\tilde{x}))\, \mathrm{d} \tilde{x} &\quad \text{ if } \tilde{y}_h \in  \mathcal{A}_h, \\
+\infty &\quad \text{ else, }
\end{cases}
$$
that enforces injectivity, i.e. the functional can only be finite on injective deformations, and our aim is to pass to the limit $h\to 0^+$.
 Here $\mathcal{A}_h$ is defined as in \eqref{setA} with $\Omega_h$ instead of $\Omega$. We perform the change of variables as in \eqref{phi} and \eqref{nablah}
which yields 
    \begin{align}
    \label{bulkFunct}
     J_h(y_h) :=\frac{1}{h}I_h(\tilde y_h):= \begin{cases}
\int_\Omega W (\nabla_h y_h(x) )\, \mathrm{d} {x} &\quad \text{ if } y_h \in \mathcal{A}, \\
+\infty &\quad \text{ else, }
\end{cases}
 \end{align}

 where we already introduced {\color{black} the class $\mathcal{A}$} in \eqref{setA}.

 \begin{remark}[Existence of minimizers in the bulk]
 Let us stress that we do not investigate within this paper the existence of minimizers if $J_h$ in \eqref{bulkFunct}, but just concentrate on the $\Gamma$-limit. Hence, we do not put any convexity (or generalized convexity) assumptions on $W$ and thus completely avoid the hard questions of the existence of minimizers on injective function (see, e.g., \cite{ballOpen}) outside the class of polyconvex functionals. As the $\Gamma$-limit procedure, however, always also contains relaxation \cite{braides}, we know that the obtained lower dimensional object will possess minimizers.
 \end{remark}

 We will now investigate the $\Gamma$-limit as $h \to 0$ of the sequence of functionals in \eqref{bulkFunct} as introduced in Section \ref{sec:dim-red}. What we expect is that it is particularly the \emph{global non-interpenetration} (or in other words global injectivity) that needs to translate to the lower-dimensional object. The intuition is that a thin bulk object can touch itself on the boundary upon deformation but it cannot intersect. Once the thinning process continues ad infinitum, the object consists only of a boundary that can touch. Thus, injectivity as a characterizer of non-interpenetration has to be given up. On the other hand, the property that intersections are not possible should withstand the limiting procedure. As we shall see later, in Subsection  \ref{sec:non-interpen} an explicit characterization of this ``non-crossing'' property is not easy, but the following definition seems straightforward: 
 
 \begin{definition}
 \label{def-interpen-rod}
  Let $y \in W^{1,p}((0,1); \mathbb{R}^2)$ with $p > 1$. Then, we say that $y$ is a \emph{interpenetrative} if for every  bounded sequence $\{ y_k\} \subset W^{1,p}((0,1); \mathbb{R}^2)$ such that $\lim_{k\to\infty}\|y_k-y\|_{\mathcal{C}^0((0,1);\mathbb{R}^2)}=0$  there is  $K \in \mathbb{N}$ such that $\forall k \geq K$ $y_k$ is not a homeomorphisms. 

  {\color{black}Conversely, we say that $y \in W^{1,p}((0,1); \mathbb{R}^2)$ with $p > 1$ is \emph{non-interpenetrative} if there exists a bounded sequence $\{\tilde y_k\} \subset W^{1,p}((0,1); \mathbb{R}^2)$ of homeomorphisms such that $\lim_{k\to\infty}\|\tilde y_k- y\|_{\mathcal{C}^0((0,1); \mathbb{R}^2)}=0$.}

\end{definition}

\rone Here and in the following, we use the shorthand "homeomorphism" for a homeomorphism onto its image. \EEE

Indeed, the definition formalizes the intuitive idea that if the rod intersected itself - as opposed to mere touching - then this needs to be the case also for any other function in $W^{1,p}((0,1); \mathbb{R}^2)$ that is close enough to the original intersecting map. To prove, however, that this definition of non-interpenetration is physically correct, we need to show that $\Gamma$-limits of the functional $J_h$ can only be finite on the class non-interpenetrative functions. 

We will show that this is indeed the fact and claim that the $\Gamma$-limit of \eqref{bulkFunct} is the following functional
\begin{equation}
    \label{rodFunct}
    J(y(x)):= \begin{cases}
\int_0^1 \mathcal{C}\min_{\xi \in \mathbb{R}^2} W(y'(x)| \xi) \mathrm{d} {x} &\quad \text{ if } {y} \text{ is non-interpenetrative}, \\
+\infty &\quad \text{ else, }
\end{cases}
\end{equation}
where $\mathcal{C}\min_{\xi \in \mathbb{R}^2} W(\cdot| \xi)$ is the convex envelope of $\min_{\xi \in \mathbb{R}^2} W(\cdot| \xi)$.

This is formalized in the following theorem: 

\begin{thm}
\label{thm-dimRed}
Let $p \in \dc [\EEE 2, \infty)$ and  suppose that $W$ in \eqref{bulkFunct} is a continuous function on $\mathbb{R}^{2 \times 2}$ with $\det (\cdot) > 0$ while
$$
W(F)=+\infty \text { if and only if } \operatorname{det} F \leqslant 0
$$
and satisfies the following growth condition: there exits a $c > 0$ and for every $\delta>0$, there exists $c_\delta>0$ such that for all $F \in \mathbb{R}^{2 \times 2}$  with $\operatorname{det} F \geqslant \delta$ it holds
\begin{align*}
    c|F|^p \leqslant W(F) \leqslant c_\delta\left(1+|F|^p\right).
\end{align*}

Then, the functionals $J_h$ from \eqref{bulkFunct} $\pi{-}\Gamma${\color{black}-}converge (in the sense of Definition \ref{GammaConvDef}) to $J$ given in \eqref{rodFunct}.
\end{thm}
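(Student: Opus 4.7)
My plan is to split the proof into the two standard parts of a $\Gamma$-convergence statement and, on each part, to import the energetic bounds from the classical membrane $\Gamma$-convergence result of Le Dret-Raoult and Anza-Hafsa-Mandallena (which already works under the hypotheses of the theorem and accommodates local injectivity) while adding a separate argument to handle the global injectivity constraint.

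For the liminf, take $\{y_h\}\subset W^{1,p}(\Omega;\mathbb{R}^2)$ with $\pi(y_h)\to y$ in $L^p$ and assume without loss of generality that $\liminf_h J_h(y_h)<\infty$. Along a subsequence each $y_h$ lies in $\mathcal{A}$, and the $p$-growth of $W$ together with the uniform bound on $\int_\Omega W(\nabla_h y_h)\,dx$ yields $\|\partial_1 y_h\|_{L^p(\Omega)}\le C$ and $\|\partial_2 y_h\|_{L^p(\Omega)}\le Ch$. The classical membrane liminf then delivers $\liminf_h \int_\Omega W(\nabla_h y_h)\,dx \ge \int_0^1 \mathcal{C}\min_\xi W(y'|\xi)\,dt$, so only the non-interpenetration of $y$ remains. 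I verify it by horizontal slicing: for a.e.~$c\in(-1/2,1/2)$ the trace $y_h(\cdot,c)$ belongs to $W^{1,p}((0,1);\mathbb{R}^2)$ and, being the restriction of the homeomorphism $y_h$ to the segment $(0,1)\times\{c\}$, is a homeomorphism onto its image. A Fubini/pigeonhole selection of $c_h$ gives $\|y_h(\cdot,c_h)\|_{W^{1,p}}\le C$ uniformly in $h$, and a direct Poincaré-type estimate
\[
\|y_h(\cdot,c_h)-\pi(y_h)\|_{L^p(0,1)}\le C\|\partial_2 y_h\|_{L^p(\Omega)}\le Ch,
\]
combined with $\pi(y_h)\to y$ in $L^p$ and the compact embedding $W^{1,p}((0,1))\hookrightarrow C^0([0,1])$ applied to $y_h(\cdot,c_h)$, forces $y_h(\cdot,c_h)\to y$ uniformly. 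Hence $y$ is the $C^0$-limit of a $W^{1,p}$-bounded sequence of Sobolev homeomorphisms, i.e.\ non-interpenetrative in the sense of Definition~\ref{def-interpen-rod}.

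For the recovery sequence, let $y$ be non-interpenetrative. By Definition~\ref{def-interpen-rod} together with the weak=strong=$C^0$ closure equivalence for Sobolev homeomorphisms announced in the abstract (an adaptation of De Philippis-Pratelli), I obtain homeomorphisms $\tilde y_k\in W^{1,p}((0,1);\mathbb{R}^2)$ with $\tilde y_k\to y$ strongly in $W^{1,p}$, and after a further smoothing step I may take each $\tilde y_k$ to be a $C^1$ embedded arc in $\mathbb{R}^2$. For each such smooth arc, the Anza-Hafsa-Mandallena construction produces locally injective maps $u_{h,k}\in W^{1,p}(\Omega;\mathbb{R}^2)$ with $\pi(u_{h,k})\to \tilde y_k$ and
\[
\int_\Omega W(\nabla_h u_{h,k})\,dx \;\longrightarrow\; \int_0^1 \mathcal{C}\min_\xi W(\tilde y_k'|\xi)\,dt \qquad\text{as } h\to 0.
\]
The main obstacle is to upgrade local injectivity to global injectivity so that $u_{h,k}\in\mathcal{A}$. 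My plan is to perform the construction inside a $C^1$ tubular neighbourhood $T_k\subset\mathbb{R}^2$ of the simple arc $\tilde y_k([0,1])$: since the lamination that realises the convex envelope $\mathcal{C}\min_\xi W(F|\xi)$ can be arranged so that its tangential component keeps the curve advancing along arc length while the transverse component is produced by the normal $x_2$-thickening, a standard tubular neighbourhood argument forces the bulk map to remain in $T_k$ and to be globally one-to-one once $h$ is small. A diagonal extraction in $k$ and $h$ then yields the recovery sequence for $y$.

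The main technical difficulty sits in this recovery step, and it has two intertwined aspects: the lamination required to reach the convex envelope inherently oscillates $y'$ and can a priori create self-intersections of the bulk recovery; and controlling these oscillations inside a tubular neighbourhood presupposes that the non-interpenetrative $y$ can be approximated by homeomorphisms that are regular enough to carry such a tube, which is exactly where the strong-$W^{1,p}$ density of homeomorphisms in the non-interpenetrative class (the De Philippis-Pratelli-style closure result highlighted in the abstract) is genuinely used. The liminf, by contrast, reduces to a clean Fubini-plus-Poincaré slicing argument grafted onto the classical membrane $\Gamma$-liminf.
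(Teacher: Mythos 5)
Your liminf argument is a legitimate alternative to the paper's: instead of invoking the planar Sobolev embedding (and, for $p=2$, the Iwaniec--Martin modulus-of-continuity estimate for homeomorphisms) to control the trace $y_h(\cdot,0)$, you slice at a.e.~$c$, pick $c_h$ by Fubini so that $\|y_h(\cdot,c_h)\|_{W^{1,p}(0,1)}$ is bounded, and then use the one-dimensional compact embedding $W^{1,p}(0,1)\hookrightarrow C^0[0,1]$ together with the Poincar\'e estimate $\|y_h(\cdot,c_h)-\pi(y_h)\|_{L^p}\le C h$. This gives directly the bounded sequence of $W^{1,p}$ homeomorphisms converging in $C^0$ that Definition~\ref{def-interpen-rod} asks for, and delegating the energy liminf to Anza-Hafsa--Mandallena (since $J_h$ dominates the functional without the global injectivity constraint) is fine. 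If anything your slicing argument is slightly simpler than the paper's and does not need $p\ge 2$ for the non-interpenetration conclusion.

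The recovery-sequence sketch, however, has a genuine gap precisely where you flag the "main technical difficulty." You propose to take a $C^1$ regular simple arc $\tilde y_k$ near $y$, run the Anza-Hafsa--Mandallena construction to get locally injective bulk maps $u_{h,k}$, and then argue that these stay in a tubular neighbourhood $T_k$ of $\tilde y_k([0,1])$ and are therefore globally injective. This does not work as stated. The lamination realising $\mathcal{C}\min_\xi W(\cdot|\xi)$ at $\tilde y_k'$ generically decomposes $\tilde y_k'$ into vectors $a_1,\dots,a_N$ that are \emph{not} tangent to the arc (and may even have negative tangential component — e.g.\ when $\tilde y_k'$ is small and the convex envelope is realised by nearly opposite laminate directions). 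The resulting zigzag is not a normal perturbation of the arc, so neither the trace $\pi(u_{h,k})$ nor $u_{h,k}$ itself need stay inside any tubular neighbourhood of fixed width, and local injectivity of $u_{h,k}$ does not upgrade to global injectivity by a tubular argument alone. Your claim "the lamination \dots can be arranged so that its tangential component keeps the curve advancing along arc length while the transverse component is produced by the normal $x_2$-thickening" conflates the lamination oscillation (which lives in the $x_1$-derivative) with the through-the-thickness direction (which lives in the $x_2$-derivative): these are independent and the former is the dangerous one.

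The paper avoids this by decoupling the two steps in the opposite order. First, it performs the relaxation entirely at the one-dimensional level: Lemma~\ref{lemma-relax} shows, using Theorem~\ref{NotParticullarlyDeep} together with an explicit zigzag construction, that for non-interpenetrative $y$ there exist \emph{piecewise affine injective} curves $\bar y_k\colon(0,1)\to\mathbb{R}^2$ whose 1D energy $\int_0^1 f(\bar y_k')$ already approaches $\int_0^1\mathcal{C}f(y')$, where $f=\min_\xi W(\cdot|\xi)$; the injectivity of the zigzag is preserved by hand (the arguments around the choice of $\beta(k)$, $n(k)$ and the careful treatment of co-linear laminate directions). Only then does the paper lift to a bulk map, and it does so by a rigid ansatz $y_{h,k,i}(x_1,x_2)=\bar y_k(x_1)+x_2\, b_{k,i}(x_1)$ with a smoothed Cosserat field $b_{k,i}$ having $\det(\bar y_k'|b_{k,i})\ge\varepsilon$ (Remark~\ref{smoothing-det}). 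For this tame shell-type map, Lemma~\ref{tubular-lemma-1} shows global injectivity for small $h$ by an elementary argument exploiting global injectivity and Lipschitz regularity of the 1D curve. No oscillation happens at the bulk level, so no conflict between laminates and injectivity can arise. To repair your sketch you would essentially have to reproduce this 1D-relaxation-then-rigid-lift scheme rather than appeal to the AHM bulk construction.
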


\rone 
\begin{remark}
Let us remark that in the proof of Theorem \ref{thm-dimRed} ahead, we will also show the following equi-coercivity property. If  $\{y_h\}\subset W^{1,p}(\Omega; \mathbb{R}^2)$ is such that $J_h(y_h) \leq C$ for some constant $C$, then (at least for a non-relabelled subsequence)
\begin{align}\label{min}
y_h \rightharpoonup y \qquad \text{ for some } y \in W^{1,p}(\Omega; \mathbb{R}^2),
\end{align}
such that $y$ is constant in the second variable. 

Further,   the Fundamental Theorem of  Gamma-convergence \cite[Thm.~7.8]{dalmaso} states that  if $\lim_{h\to 0}(J_h(y_h)-\inf_{\mathcal{A}}J_h)=0$ for a sequence $\{y_h\}_{h>0}$ satisfying \eqref{min} then $y$ minimizes $J$.
\end{remark}
\EEE

The proof of Theorem \ref{thm-dimRed} relies crucially on the fact that the set non-interpenetrative deformations can {\color{black} be} characterized as the closure of homeomorphisms in $W^{1,p}((0,1); \mathbb{R}^2)$ with respect to the weak and strong topologies and that these are the same. Indeed, we shall use the availability of the first closure while proving the "liminf"-condition and the latter to provide a recovery sequence. We summarize this statement in the following theorem:

\begin{thm}\label{NotParticullarlyDeep}
	Let $p\in(1,\infty)$ and $y \in W^{1,p}((0,1), \er^2)$. The following conditions are equivalent
	\begin{enumerate}
            \item The map $y \in W^{1,p}((0,1), \er^2)$ is non-interpenetrative.
            \item There exists a bounded sequence of homeomorphisms  $\{y_k\} \in W^{1,p}((0,1); \mathbb{R}^2)$ such that $y_k \rightharpoonup y \in  W^{1,p}((0,1); \mathbb{R}^2)$.
		\item There exists a bounded sequence of homeomorphisms  $\{\tilde y_k\} \in W^{1,p}((0,1); \mathbb{R}^2)$ such that $\tilde y_k \to y \in  W^{1,p}((0,1); \mathbb{R}^2)$. Moreover, the approximating sequence can be chosen in such a way that all  $\tilde y_k$   are piecewise affine.
	\end{enumerate}   
\end{thm}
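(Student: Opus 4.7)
My plan is to establish the equivalences via a cyclic chain $(3)\Rightarrow(2)\Rightarrow(1)\Rightarrow(3)$, in which only the last implication requires genuine work.

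The implications $(3)\Rightarrow(2)$ and $(2)\Rightarrow(1)$ follow at once from standard Sobolev-embedding arguments: strong convergence in $W^{1,p}$ implies weak convergence, and for $p>1$ the embedding $W^{1,p}((0,1))\hookrightarrow C^0([0,1])$ is compact, so any bounded weakly convergent sequence is in fact $C^0$-convergent to the same limit. The reverse implication $(1)\Rightarrow(2)$ is a standard subsequence-of-subsequence argument: from the bounded sequence $\{y_k\}$ with $y_k\to y$ in $C^0$, extract a weakly convergent subsequence $y_{k_j}\rightharpoonup z$ in $W^{1,p}$; by the same compact embedding $y_{k_j}\to z$ in $C^0$, forcing $z=y$, and since every subsequence admits a further subsequence converging weakly to $y$, the full sequence does as well.

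The main implication $(1)\Rightarrow(3)$ requires a construction. Given $y$ non-interpenetrative, I would start from the canonical piecewise linear nodal interpolant $P_n$ of $y$ at the mesh $t_i=i/n$, which by a standard density result satisfies $P_n\to y$ strongly in $W^{1,p}((0,1);\mathbb{R}^2)$. This $P_n$ is typically not injective (and the vertices need not even be distinct), so the task reduces to replacing $P_n$ by a piecewise affine homeomorphism $\tilde y_n$ with $\|\tilde y_n-P_n\|_{W^{1,p}}\to 0$. I would do this by identifying the finite collection of self-intersections of $P_n$ and, around each, rerouting one of the offending arcs along a short transversal detour on a small sub-interval $I_n\subset (0,1)$. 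The \emph{direction} and \emph{amplitude} of each detour would be dictated by a sufficiently close approximating homeomorphism $y_{k(n)}$, chosen diagonally from the non-interpenetration hypothesis: since $y_{k(n)}$ is $C^0$-close to $y$ and locally separates the would-be colliding branches of $P_n$, it offers a template for a small-amplitude piecewise affine perturbation that renders $P_n$ injective.

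The main obstacle, and the part requiring an adaptation of the De Philippis--Pratelli technique \cite{DPP}, is to quantify the $W^{1,p}$-cost of all these local modifications. One must ensure that (i) the time-intervals $I_n$ supporting the detours have total measure tending to $0$, so that $\int_{I_n}|P_n'|^p\,\mathrm{d}t$ vanishes by absolute continuity of the integral, and (ii) the derivative of the added detour can be controlled in $L^p$ in terms of the bounded $W^{1,p}$-norm of $y_{k(n)}$. The delicate balance between smallness of the modification (to keep the $W^{1,p}$-perturbation negligible) and its largeness (to actually achieve injectivity) is the heart of the argument; in the one-dimensional setting of curves in $\mathbb{R}^2$ it is considerably more tractable than in the planar case of \cite{DPP}, but still requires a careful calibration of $n$, $k(n)$ and the detour scale.
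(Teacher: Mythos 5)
Your cyclic scheme and the two trivial implications agree with the paper's, which (after its relabelling) proves strong $\Rightarrow$ weak $\Rightarrow$ uniform $\Rightarrow$ strong, the middle step being Lemma~\ref{uniform}. All the substance is in $(1)\Rightarrow(3)$, and there your plan departs from the paper's in a way that leaves genuine gaps.

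The paper does \emph{not} begin from the domain-uniform piecewise linear interpolant $P_n$ of $y$. It instead introduces a ``good arrival grid'' $\G_\delta$ in the \emph{target} (Definition~\ref{good arrival grid}, Lemma~\ref{ArrivalGrid}): a $\delta$-fine grid chosen so that $y$ crosses it transversally at a \emph{finite} set $F_\delta\subset(0,1)$ of times, never at a grid vertex. The new curve is then built from scratch as a piecewise-affine interpolant between the times in $F_\delta$, with the crossing positions on the grid shifted according to a $C^0$-close homeomorphism $\tilde{\phi}_\delta$, and with consecutive crossings that land on the same grid line joined by generalized segments with small parameter $\xi$. Injectivity is then checked \emph{rectangle by rectangle}: arcs lying in different grid rectangles cannot meet, and within a single rectangle the ordering of the arcs of $\tilde{\phi}_\delta$ passes to the straight segments via a planar Jordan-type argument. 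The $W^{1,p}$-convergence follows from the constant-speed parametrization, the sharp length estimate of Proposition~\ref{EverythingIDo}, and, for $p>1$, the fact that $\|\phi_\delta'\|_{L^p}\to\|\phi'\|_{L^p}$ together with weak convergence and uniform convexity of $L^p$.

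Your domain-mesh scheme hits at least two concrete obstacles that the arrival-grid construction is specifically designed to remove. First, the self-intersection set of $P_n$ need not be finite: if $y$ retraces a segment (as non-interpenetrative maps may well do), consecutive links of $P_n$ are collinear and overlapping, so the intersection locus is a positive-length arc rather than isolated points; the paper's generalized segments with parameter $\xi$ exist precisely to resolve that degeneracy. Second, a small detour that removes one crossing can easily introduce new crossings with other arcs of the (long, possibly oscillating) curve, and your sketch offers no mechanism to prevent the repair process from cascading. The target-grid decomposition solves this by decoupling the problem: any local modification lives in a single grid rectangle and cannot collide with arcs outside it, and inside one rectangle all arcs are simultaneously ordered by $\tilde{\phi}_\delta$. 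Without either importing that decoupling device or supplying an alternative argument that the intersection-repair process terminates with controlled $W^{1,p}$ cost, the implication $(1)\Rightarrow(3)$ is not established.
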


\rone
Let us remark that Theorem \ref{NotParticullarlyDeep} is strictly restricted to the planar case. Analogous characterizations have been obtained also for planar bulk deformations (\cite{iwan,DPP}), but a generalization to deformations $y: \mathbb{R}^2 \to \mathbb{R}^3$ or even $y: \mathbb{R}^3 \to \mathbb{R}^3$ this seems to be widely open at this moment.

\begin{Problem}
\label{prob-char0}
Can a characterization analogous to Theorem \ref{NotParticullarlyDeep} be also provided for deformations describing thin films, i.e. $y: \mathbb{R}^2 \to \mathbb{R}^3$?
\end{Problem}

\EEE

\label{sec:non-interpen}
The definition of non-interpenetration in Definition \ref{def-interpen-rod} is, albeit natural, quite implicit. Indeed, it relies on constructing an approximating sequence, which can be a challenging task in general situations. So, it is a natural question if a more explicit and an easy-to-check condition can be given as a characterization of non-interpenetration.  Hence,  we formulate the following open problem:

\begin{Problem}
\label{prob-char}
Is there a characterization of non-interpenetration (as defined in Definition \ref{def-interpen-rod}) that can be easily checked be found? 
\end{Problem}

Within this paper, we do not solve Problem \ref{prob-char}, but we shall at least explore some ideas that might look tempting but turn out to be insufficient. The discussion thus will indicate that still non-interpenetration on lower dimensional objects will need more exploration.  We do, however, refer the interested reader to the paper \cite{DC} for a discussion on related topics in the $\er^2 \to \er^2$ case.

One obvious direction that one could try is to look for inspiration in the bulk setting. There, the so-called Ciarlet-Ne\v{c}as condition (see \cite{ciarlet-necas}) that reads as follows
\begin{align}\label{eq:ciarlet-necas}
\int_{\Omega_h} {\rm det }\,\nabla \tilde y_h(x)\,\mathrm{d} x\leq
\mathcal{L}^n(y_h(\Omega_h))
\end{align}
can be imposed on $\tilde y_h\in\mathcal{A}_h$ in order to ensure injectivity of deformations almost everywhere in the deformed configuration $\tilde y_h(\Omega_h)$.
This condition, however, provides only scant information if we pass to the lower-dimensional objects.  Indeed,  recall that  $\Omega=(0,1) \times (-1/2;1/2)$. As outlined in Section above, we perform the change of variables {\color{black}$\phi_h:\Omega\to\Omega_h$} and set $y:=y_h\circ \phi_h$ as well as $$\nabla_h v :=\Big(\partial_1v  \big|\frac{\partial_2 v }{h}\Big)$$ for every for every $v\in W^{1,p}(\Omega;\mathbb{R}^2)$. The limit passage for $h\to 0$ in \eqref{eq:ciarlet-necas} leads to  
\begin{align*}
\int_0^1 (\partial_1 y_h)^\perp\cdot b\, {\rm d} S\le
\lim_{h\to 0} \frac{\mathcal{L}(y_h(\Omega_{h})}{h} 
\end{align*}
where $b$ is  a {\it Cosserat} vector obtained as  $b=\lim_{h\to 0} h^{-1} \partial_2 y_h$ in $ W^{1,p}(\Omega;\er^2)$ and for any $a \in \mathbb{R}^2$, we have that $(a_1,a_2)^\perp {\color{black} := } (-a_2, a_1)$.
In particular, if $b= (\partial_1 y)^\perp/|(\partial_1 y)^\perp|$, i.e., it is the unit normal vector to the film in the deformed configuration,
and if $\lim_{h\to 0} \mathcal{L}^2(y_h(\Omega_{h}))/h=\mathcal{H}^1(y((0,1))$ we get
\begin{align}\label{cn2D-spec}
\int_\omega |(\partial_1 y)^\perp|\,{\rm d}S\le \mathcal{H}^1(y((0,1))).
\end{align}

The left-hand side of \eqref{cn2D-spec} is the length of the film calculated by the change-of-variables formula while the right-hand side is 
the measured length. Hence, \eqref{cn2D-spec} is violated by a {\it
  folding} deformation, which  should be  admissible  among the family of realistic thin-film deformations,  while  \eqref{cn2D-spec} 
is satisfied if the film crosses itself,  which   violates
non-self-interpenetration of matter and is hence not admissible. \rone Notice that this observation was already made in \cite{DKPS}.\EEE

Another tempting path may be to characterize non-interpenetration by using degree theory at least in the case when the limit deformation $y$ satisfies that $y(0)=y(1)$ so we can assume that $y$ is defined on the unit circle in $\mathbb{R}^2$ instead of $(0,1)$. Now, every injective continuous embedding $y$ of the circle into the plane, decomposes the plane into two domains: a bounded domain where the degree with respect to \rone $y$ \EEE  is one (or minus one) and the unbounded domain where the degree is zero. Since the degree is stable under uniform convergence, one could ask whether the set of non-interpenetrative functions is (at least in this special setting)  identical to
	$$
	\D:=\bigl\{y \in W^{1,p}((0,1),\er^2):\ y(0)=y(1)\text{ and }\deg(x,y)\in\{0,1\}\text{ for a.e. }x\in\er^2 \bigr\}. 
	$$

But this is not the case as we show in Example~\ref{horny}: 

\begin{example}\label{horny} (Horned devil) 
\rone
The function $y$ as depicted on the right-hand side of Figure \ref{devil} cannot be approximated by injective curves. Here, it is important to notice that it is not only the graph of the image that we need to approximate, but indeed the function itself which includes the way how the graph is ``run through". Therefore, we also marked the image of the points $A, \ldots, F$ in the image.    

Keeping the above in mind, we provide on the left one member of a possible sequence of approximating  functions for $y$, which is non-injective since one of the ``horns" intersects the vertical connection. It would be possible to fix this non-injectivity produced by the right ``horn" but only at the expense that the left one would now intersect the vertical connection. Thus, we conclude that $y$ as given cannot be approximated by injective functions.

It is not difficult to check though that $\deg(\cdot,y)=1$ both in the rectangle in the target and in both ``horns''. This is because, roughly speaking, the degree ``sees'' only the graph of the function and not the ``run through". \EEE
	
	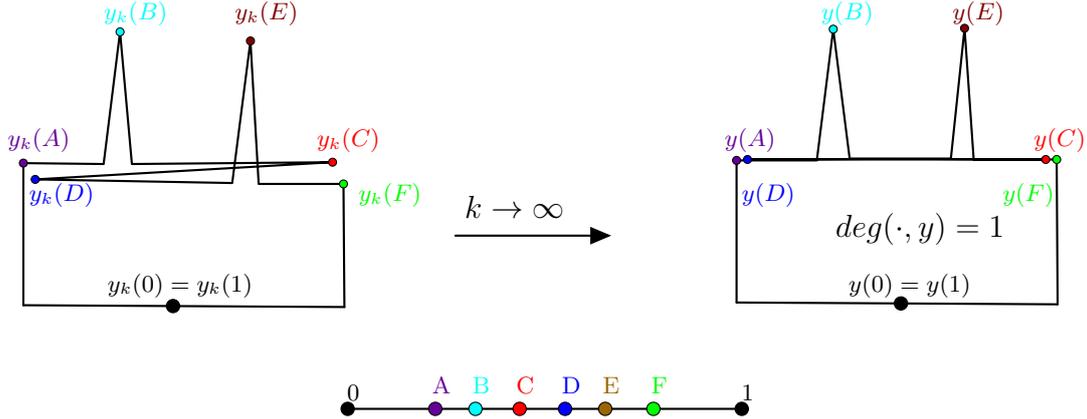
\begin{figure} [h t p]
		\definecolor{zzwwqq}{rgb}{0.6,0.4,0.}
\definecolor{qqffqq}{rgb}{0.,1.,0.}
\definecolor{yqqqqq}{rgb}{0.5019607843137255,0.,0.}
\definecolor{qqqqff}{rgb}{0.,0.,1.}
\definecolor{ffqqqq}{rgb}{1.,0.,0.}
\definecolor{qqffff}{rgb}{0.,1.,1.}
\definecolor{wwqqzz}{rgb}{0.4,0.,0.6}
\begin{tikzpicture}[line cap=round,line join=round,>=triangle 45,x=0.6cm,y=0.6cm]
\clip(-0.1,-4.7) rectangle (24.1,5.2);
\draw [line width=0.8pt] (0.56,-1.74)-- (0.56,1.4)-- (2.32,1.38)-- (2.68,4.3)-- (2.94,1.38)-- (7.34,1.42)-- (0.82,1.04)-- (5.14,0.96)-- (5.54,4.1)-- (5.72,0.94)-- (7.58,0.94)-- (7.6,-1.78);
\draw [line width=0.8pt] (0.56,-1.74)-- (7.6,-1.78);
\draw [line width=0.8pt] (16.2,-1.68)-- (16.2,1.46)-- (17.96,1.46)-- (18.32,4.36)-- (18.68,1.5)-- (22.98,1.48)-- (16.44,1.48)-- (20.9,1.5)-- (21.2,4.38)-- (21.4,1.48)-- (23.22,1.48)-- (23.24,-1.72);
\draw [line width=0.8pt] (16.2,-1.68)-- (23.24,-1.72);
\draw [->,line width=0.8pt] (10.03335334903318,-0.20269468864645065) -- (13.45335334903318,-0.20269468864645065);
\draw (10.0,0.9206869488032271) node[anchor=north west] {$k\to\infty$};
\draw (18.13235299036017,0.541374484500099) node[anchor=north west] {$deg(\cdot,y)=1$};
\draw [line width=0.8pt] (7.671940157880253,-4.030313081547865)-- (16.313761685506993,-4.030313081547865);
\begin{scriptsize}
\draw [fill=wwqqzz] (0.56,1.4) circle (1.5pt);
\draw[color=wwqqzz] (0.9020842993905577,1.9) node {$y_k(A)$};
\draw [fill=qqffff] (2.68,4.3) circle (1.5pt);
\draw[color=qqffff] (3.026234099488078,4.7) node {$y_k(B)$};
\draw [fill=ffqqqq] (7.34,1.42) circle (1.5pt);
\draw[color=ffqqqq] (7.6917774104165595,1.9) node {$y_k(C)$};
\draw [fill=qqqqff] (0.82,1.04) circle (1.5pt);
\draw[color=qqqqff] (1.4,0.7) node {$y_k(D)$};
\draw [fill=yqqqqq] (5.54,4.1) circle (1.5pt);
\draw[color=yqqqqq] (5.890043204976699,4.7) node {$y_k(E)$};
\draw [fill=qqffqq] (7.58,0.94) circle (1.5pt);
\draw[color=qqffqq] (8.6,0.7) node {$y_k(F)$};
\draw [fill=wwqqzz] (16.2,1.46) circle (1.5pt);
\draw[color=wwqqzz] (16.501309393856722,1.9) node {$y(A)$};
\draw [fill=qqffff] (18.32,4.36) circle (1.5pt);
\draw[color=qqffff] (18.625459193954242,4.7) node {$y(B)$};
\draw [fill=ffqqqq] (22.98,1.48) circle (1.5pt);
\draw[color=ffqqqq] (23.291002504882723,1.9) node {$y(C)$};
\draw [fill=qqqqff] (16.44,1.48) circle (1.5pt);
\draw[color=qqqqff] (16.9,0.7) node {$y(D)$};
\draw [fill=yqqqqq] (21.2,4.38) circle (1.5pt);
\draw[color=yqqqqq] (21.508233922658018,4.7) node {$y(E)$};
\draw [fill=qqffqq] (23.22,1.48) circle (1.5pt);
\draw[color=qqffqq] (22.6,0.7) node {$y(F)$};
\draw [fill=black] (3.842715600443314,-1.758651793184337) circle (2.5pt);
\draw[color=black] (4.0,-1.3) node {$y_k(0) = y_k(1)$};
\draw [fill=black] (19.804484803224867,-1.7004800272910503) circle (2.5pt);
\draw[color=black] (20.0,-1.3) node {$y(0) = y(1)$};
\draw [fill=black] (7.671940157880253,-4.030313081547865) circle (2.5pt);
\draw[color=black] (7.79608833809992,-3.678476680872202) node {0};
\draw [fill=black] (16.313761685506993,-4.030313081547865) circle (2.5pt);
\draw[color=black] (16.444412524211252,-3.678476680872202) node {1};
\draw [fill=wwqqzz] (9.593797902549547,-4.030313081547865) circle (2.5pt);
\draw[color=wwqqzz] (9.730581906045874,-3.5) node {A};
\draw [fill=qqffff] (10.474038090947696,-4.030313081547865) circle (2.5pt);
\draw[color=qqffff] (10.60300057394307,-3.5) node {B};
\draw [fill=ffqqqq] (11.442302298185659,-4.030313081547866) circle (2.5pt);
\draw[color=ffqqqq] (11.570247357916047,-3.5) node {C};
\draw [fill=qqqqff] (12.439907845036895,-4.030313081547866) circle (2.5pt);
\draw[color=qqqqff] (12.575425388319339,-3.5) node {D};
\draw [fill=zzwwqq] (13.320148033435045,-4.030313081547865) circle (2.5pt);
\draw[color=zzwwqq] (13.447844056216534,-3.5) node {E};
\draw [fill=qqffqq] (14.376436259512824,-4.030313081547866) circle (2.5pt);
\draw[color=qqffqq] (14.509918956265293,-3.5) node {F};
\end{scriptsize}
\end{tikzpicture}
		\caption{An example of a $y$ in $\D$ which cannot be approximated by injective curves. As part of our explanation we depict the image of the fixed points $A,B,C,D,E,F$ in $y_k$ and in the limit map $y$.}\label{devil}
	\end{figure}

\end{example}

To close the discussion on Problem \ref{prob-char} let us point out that the study of the intersection theory in algebraic topology seems to be very related. Indeed, the concept of \emph{weakly simple curves}, i.e., those for which, for each $\varepsilon >0$, there exists a simple curve whose Frechet distance to the studied one is at most $\varepsilon$, is closely related to non-interpenetration. A characterization of weakly simple curves, even in the piecewise affine setting, is highly desirable and also the only possibilities available up to know seem to be algorithmic \cite{algebra}.

{\color{black} Let us remark that even though the results of this work apply only  to the case of rods in the plane (that is deformations in $W^{1,p}((0,1); \mathbb{R}^2)$) it seems plausible that our proposed definition of non-interpenetration translates in verbatim also to the case of thin films in space.}

 {\color{black} Finally, we note that the setting considered in this work does not penalize rapid oscillations upon relaxation. Therefore, it cannot be avoided (by any growth of energy as $\mathrm{det}F \to 0$) that the null function lies in the set of }acceptable deformations for the rod. This paradox could most probably be only prevented by including higher gradients of deformation in the energy or by using a different scaling regime. Yet, it seems to be unknown if an equivalent of Theorem \ref{NotParticullarlyDeep} can hold in a smoother setting, i.e. we have the following:

\begin{Problem}
 Does it hold that the weak and strong closure of homeomorphisms are the same in $W^{k,p}((0,1); \mathbb{R}^2)$ with $k \geq 2$?   
\end{Problem}

\section{Proof of Theorem \ref{thm-dimRed}}

In this section, we give the detailed proof of Theorem \ref{thm-dimRed}. It requires a few auxiliary results. In the first one, we show that convexity is a crucial property for sequential weak lower semicontinuity of integral functionals along sequences of injective maps. 

\begin{lemma}\label{lemma;convexity-wlsc}
Suppose that $f: \mathbb{R}^2 \to \mathbb{R} \cup\{+\infty\}$ is finite everywhere except perhaps at zero. Then the integral functional 
 $$
 I(y) = \int_0^1 f(y'(x)) \mathrm{d} x
 $$
 with $I: W^{1,p}((0,1); \mathbb{R}^2) \to \mathbb{R}$ is weakly lower semicontinuous along sequences of homeomorphisms if and only if $f$ is convex on $\mathbb{R}^2$, i.e., finite everywhere.
\end{lemma}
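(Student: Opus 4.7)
The ``if'' direction is classical: a convex function $f\colon \mathbb{R}^2 \to \mathbb{R}$ that is finite everywhere is continuous, and the Tonelli--Serrin theorem then yields weak lower semicontinuity of $I$ on all of $W^{1,p}((0,1); \mathbb{R}^2)$, hence a fortiori along sequences of homeomorphisms.

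The ``only if'' direction is the substantive part. My plan is to test the wlsc hypothesis against oscillating sequences of injective piecewise affine functions in the spirit of the classical laminates used to characterize convexity of integrands. The key observation is that for any $A, B \in \mathbb{R}^2$ that are both nonzero and not antiparallel, there is a unit vector $e$ with $e\cdot A > 0$ and $e \cdot B > 0$. The standard piecewise affine function $y_n$ whose derivative alternates between $A$ and $B$ on a uniform partition of $(0,1)$ of mesh $1/n$ with relative weights $\lambda$ and $1-\lambda$ then satisfies $(e \cdot y_n)' > 0$ a.e., so $e \cdot y_n$ is strictly increasing and $y_n$ is injective---hence a homeomorphism onto its image. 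Since $y_n \rightharpoonup y_\infty$ weakly in $W^{1,p}$ with $y_\infty(x) = x(\lambda A + (1-\lambda) B)$, the wlsc hypothesis delivers
\begin{equation*}
f(\lambda A + (1-\lambda) B) \leq \lambda f(A) + (1-\lambda) f(B)
\end{equation*}
for every such ``compatible'' pair $(A, B)$.

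Next, I would upgrade this partial convexity to full convexity on $\mathbb{R}^2$ in three substeps. First, testing wlsc against constant-slope homeomorphisms $y_k(x) = x A_k$ with $A_k \to A$ and $A_k \neq 0$ gives lower semicontinuity of $f$ on $\mathbb{R}^2$. Second, since the base convexity is valid on every open convex subset of $\mathbb{R}^2 \setminus \{0\}$ (such a set cannot contain an antiparallel pair through the origin), and $f$ is finite there, it follows that $f$ is continuous on $\mathbb{R}^2 \setminus \{0\}$. Third, for antiparallel pairs $A = av$, $B = -bv$ with $v \neq 0$ and $a, b > 0$, I perturb by $A_\epsilon = av + \epsilon e$, $B_\epsilon = -bv + \epsilon e$ with $e \perp v$; these pairs are compatible, so the base convexity inequality applies to them, and letting $\epsilon \to 0^+$ while invoking continuity on $\mathbb{R}^2 \setminus \{0\}$ (or lower semicontinuity at the origin, in the special case $\lambda a = (1-\lambda) b$) extends the inequality to every antiparallel pair. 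The resulting convexity of $t \mapsto f(tv)$ on all of $\mathbb{R}$, together with the base case for lines not meeting the origin, yields convexity of $f$ along every line, hence convexity on $\mathbb{R}^2$. Finiteness at $0$ is then automatic from $f(0) \leq \tfrac12(f(v) + f(-v)) < \infty$ for any $v \neq 0$.

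The main obstacle is constructing test sequences when one of $A, B$ is zero or when they are antiparallel: the natural laminate is no longer injective, as it either stands still on intervals or retraces itself along a line. The perturbation argument circumvents this obstacle, but the passage to the limit genuinely requires both the continuity of convex functions on the punctured plane (for convex combinations lying away from the origin) and the separately derived lower semicontinuity at $0$ (for convex combinations that equal the origin), so that the value $f(0)$ can be pinned down between the lower bound from l.s.c.~and the upper bound from the approximate convexity inequality.
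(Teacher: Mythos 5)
Your overall strategy mirrors the paper's: the "if" direction is classical, and the "only if" direction is built on piecewise-affine laminates whose derivatives alternate between two compatible directions, followed by a continuity/perturbation argument to handle the degenerate pairs. Your compatibility criterion (nonzero, not antiparallel) is a slightly more permissive base case than the paper's non-colinearity assumption, and your use of the fact that a finite convex function on an open convex subset of $\mathbb{R}^2 \setminus \{0\}$ is automatically continuous is a nice shortcut compared to the paper's hand-rolled continuity argument. Those differences are stylistic.

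However, there is a genuine gap in the treatment of the origin. You correctly extract lower semicontinuity of $f$ at $0$ from the constant-slope test and finiteness of $f(0)$ from the perturbed antiparallel test, but these two facts, together with convexity on every punctured line, do \emph{not} imply convexity of $f$ on $\mathbb{R}^2$. Both l.s.c.\ at $0$ and the antiparallel inequality only yield the \emph{upper} bound $f(0) \leq \lim_{x\to 0} f(x)$; what convexity actually requires is the opposite inequality $f(0) \geq \lim_{x\to 0} f(x)$, i.e.\ upper semicontinuity at the origin. Your closing remark about "pinning down $f(0)$ between a lower bound from l.s.c.\ and an upper bound from the approximate convexity inequality" conflates these: both estimates point the same way. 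Concretely, the function $f(x) = |x|$ for $x \neq 0$, $f(0) = -1$ satisfies every property you derive — it is finite, l.s.c., continuous on $\mathbb{R}^2 \setminus \{0\}$, convex on each ray and on each line avoiding the origin, and obeys the antiparallel inequality — yet it is not convex, since $f(\tfrac12 b) = \tfrac12 |b| > \tfrac12 f(b) + \tfrac12 f(0)$ for any $b \neq 0$. So your chain of deductions cannot exclude this $f$; some additional input from the wlsc hypothesis is needed.

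That missing input is available, but it requires a test sequence outside the class of laminates you use. The key is that a Sobolev homeomorphism of $(0,1)$ into $\mathbb{R}^2$ may have vanishing derivative on a set of positive measure, provided that set is totally disconnected. Fix $b \neq 0$ and $\lambda \in (0,1)$, pick a Cantor set $K \subset [0,1]$ of measure $\lambda$, and set $C_n = \bigcup_{k=0}^{n-1}\bigl(\tfrac{k}{n} + \tfrac{1}{n}K\bigr)$. Define $y_n$ by $y_n(0)=0$, $y_n' = 0$ on $C_n$ and $y_n' = b$ on $(0,1) \setminus C_n$. Since $C_n$ is nowhere dense, for $x_1 < x_2$ the interval $(x_1,x_2)$ meets $(0,1) \setminus C_n$ in positive measure, so $y_n(x_2) - y_n(x_1) = b\,|(x_1,x_2)\setminus C_n| \neq 0$; thus $y_n$ is injective, hence a homeomorphism, and it is Lipschitz, so in $W^{1,p}$. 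One checks $\mathbf{1}_{C_n} \rightharpoonup \lambda$ weakly-$*$, so $y_n \rightharpoonup y$ with $y(x) = (1-\lambda)b\,x$. Then wlsc gives
$$
f\bigl((1-\lambda)b\bigr) = I(y) \leq \liminf_{n} I(y_n) = \lambda f(0) + (1-\lambda) f(b),
$$
which is exactly the convexity inequality with one endpoint at the origin. Inserting this case closes the gap, and the argument for $f$ being convex along every line then goes through as you intended.
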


\begin{proof}
One implication of the proof is automatic; it is well-known that the convexity of $f$ ensures weak lower semicontinuity even in larger sets than those of homeomorphisms.

Let us thus turn to proving necessity of the convexity. Let us take two vectors $a, b \in \mathbb{R}^2$ and $t \in (0,1)$.

Let us suppose at first that $a$ and $b$ are not co-linear. Then for every $n$, we may define the piecewise affine function $y_n$ through $y_n(0)=0$ and further inductively for $k=1\ldots n$ via
$$
y_n(x) = \begin{cases}
            y_n(\frac{k-1}{n})+a (x-\frac{k-1}{n}) & \text{ for } x \in (\frac{k-1}{n}, \frac{k-1}{n}+\frac{t}{n}] \\
            y_n(\frac{k-1}{n}+\frac{t}{n})+b (x-\frac{k-1}{n} -\frac{t}{n}) & \text{ for } x \in (\frac{k-1}{n}+\frac{t}{n}, \frac{k}{n}] 
\end{cases}
$$
As defined, the functions $y_n$ are continuous, piecewise affine, and injective.
Moreover, $y_n \rightharpoonup y$ where $y$ is the affine function $y=\big(ta+(1-t)b\big)x$. Thus, if $I$ is weakly lower semicontinuous along the sequence $
\{y_n\}$, we have that
\begin{align}\label{necessity-of-convexity}
\nonumber
 \liminf_{n \to \infty } I(y_n) &= \liminf_{n \to \infty }\sum_{k = 1}^n \int_{\frac{k-1}{n}}^{\frac{k-1}{n}+\frac{t}{n}} f(a) \mathrm{d} x+  \int_{\frac{k-1}{n}+\frac{t}{n}}^{\frac{k}{n}} f(b)  \mathrm{d} x \\ &=  t f(a) + (1-t) f(b)  \geq  f(ta+(1-t)b),   
\end{align}
the convexity condition for $a,b$ not co-linear.

It is well-known that a convex function is continuous \rone in the interior of its effective domain (see e.g. \cite{dacorogna}). However, even the weakened condition in \eqref{necessity-of-convexity} assures that $f$ is continuous in all points except for zero. We give the proof of this statement here for self-containment, even if it follows the lines of the standard argument.  

Take some point $0 \neq a \in \mathbb{R}^2$ and let us assume that $f$ is not continuous at $a$ so that there exists a sequence $\{a_n\}_{n \in \mathbb{N}}$ such that $a_n \to a$ but $f(a_k) \nrightarrow f(a)$. As $a \neq 0$ at least one of the components of $a$ needs to be non-zero. Let us assume that it is the first one. Without any loss of generality (and upon passing to a subsequence if necessary) we may assume that the sequences of the components, denoted as  $\{a^1\}_n$ and $\{a^2\}_n$, are monotone and at least one being strictly monotone. Here, and in the following, we will always indicate the \emph{component} by the upper index to avoid confusion with the sequence index.

Further, we may ask that exactly one of the following conditions holds
\begin{equation}
 f(a_n) - f(a) \geq \varepsilon \quad \text{or} \quad f(a) - f(a_n) \geq \varepsilon \quad \forall n\in \mathbb{N}.   
 \label{contradic}
\end{equation}
for some $\varepsilon>0$

Let us suppose that the first condition is met.  We take some fixed element $\tilde{a} \subset \{a\}_n$  {\color{black} and suppose that} $\tilde{a}^1 \neq 0$ and $\tilde{a}^1 \neq a^1$ and decompose elements $a_n$ (for $n$ big enough) as
$$
a_n = r_n \begin{pmatrix}
           \tilde{a}^1 \\
           a_n^{2} \\
         \end{pmatrix} + (1-r_n)\begin{pmatrix}
           a^1 \\
           a_n^{2} \\
         \end{pmatrix}
$$
with $r_n \in [0,1]$ which has to satisfy that $r_n \to 0$ as $n\to \infty$. {\color{black}
If no such element can found, it means that for $n$ big enough $a^1 =a^1_n$ so that we instead rewrite 
$$
a_n = t_1 \begin{pmatrix}
           a^1 \\
           \tilde{a}^2\\
         \end{pmatrix} + (1-t_1)\begin{pmatrix}
           a^1 \\
          a^2 \\
         \end{pmatrix},
$$
which makes the proof even easier.
}As the above convex combinations contains two vectors that are not co-linear, we may use the above convexity property to show that
$$
f(a_n) \leq r_n f \left( \begin{pmatrix}
           \tilde{a}^1 \\
           a_n^{2} \\
         \end{pmatrix}\right) + (1-r_n) f\left( \begin{pmatrix}
           a^1 \\
           a_n^{2} \\
         \end{pmatrix}\right)
$$
Decomposing further (if $a^2 \neq \tilde{a}^2$, otherwise this step is not necessary)
$$
\begin{pmatrix}
           a^1 \\
           a_n^{2} \\
         \end{pmatrix} = s_n \begin{pmatrix}
           a^1 \\
           \tilde{a}^2 \\
         \end{pmatrix} + (1-s_n)\begin{pmatrix}
           a^1 \\
           a^2 \\
         \end{pmatrix}
         \text{ and }
\begin{pmatrix}
           \tilde{a}^1 \\
           a_n^{2} \\
         \end{pmatrix} = t_n \begin{pmatrix}
           \tilde{a}^1 \\
           \tilde{a}^2 \\
         \end{pmatrix} + (1-t_n)\begin{pmatrix}
           \tilde{a}^1 \\
           a^2 \\
         \end{pmatrix}         
$$
with the numbers $s_n, t_n \in [0,1]$ converging to 0 as $n \to \infty$. Thus, we get 
\begin{align*}
f(a_n) &\leq r_n f \begin{pmatrix}
           \tilde{a}^1 \\
           a_n^{2} \\
         \end{pmatrix}+ (1-r_n) f \begin{pmatrix}
           a^1 \\
           a_n^{2} \\
         \end{pmatrix} \\
         &\leq r_n \left( s_n f \begin{pmatrix}
           \tilde{a}^1 \\
           \tilde{a}^2 \\
         \end{pmatrix}  + (1-s_n) f  \begin{pmatrix}
           \tilde{a}^1 \\
           a^2\\
         \end{pmatrix}\right) + (1-r_n) \left( t_n f \begin{pmatrix}
           a^1 \\
          \tilde{a}^{2} \\
         \end{pmatrix} +(1-t_n) f\begin{pmatrix}
           a^1 \\
          a^2 \\
         \end{pmatrix} \right) \\
         &\leq f(a) + C \max\{r_n,s_n,t_n\},
\end{align*}
where $C$ is a fixed number given by the values of the function $f$ in the fixed vectors constructed as above. Combining with \eqref{contradic} we see that
$$
f(a)+\varepsilon \leq f(a_n) \leq f(a) + C  \max\{r_n,s_n,t_n\},
$$
which yields a contradiction for $n$ large enough because $\max\{r_n,s_n,t_n\} \to 0$ as $n \to \infty$.

If the second condition in \eqref{contradic} holds, we show the claim in the case when $a^1, a^2 > 0$ and both sequences $\{a^1\}_n$ and $\{a^2\}_n$ a strictly monotone decreasing to $a^1$ and $a^2$ with modification to the other cases being straightforward. Then we decompose
$$
a = t_n \begin{pmatrix}
           {\color{black} (1/2)} a^1 \\
           a^2 \\
         \end{pmatrix} + (1-t_n)\begin{pmatrix}
           a_n^1 \\
           a^2 \\
         \end{pmatrix} \text{ and } \begin{pmatrix}
           a_n^1 \\
           a^2 \\
         \end{pmatrix} = r_n \begin{pmatrix}
           a_n^1 \\
           {\color{black} (1/2)} a^2 \\
         \end{pmatrix} + (1-r_n) \begin{pmatrix}
           a_n^1 \\
           a_n^2 \\
         \end{pmatrix}
$$
Now, we analogously to above use again the convexity property to show that 
$$
f(a_n)+\varepsilon \leq f(a) \leq f(a_n) + C \max\{r_n,t_n\},
$$
which is again a contradiction for $n$ large enough.

Having the continuity property at our disposal we may show the convexity inequality even for co-linear non-zero vectors $a$ and $b$ by defining 
$$
a_n = \begin{pmatrix}
           a^1+1/n \\
           a^2 \\
         \end{pmatrix}
$$
and taking the sequence 
$$
y_n(x) = \begin{cases}
            y_n(\frac{k-1}{n})+a_n (x-\frac{k-1}{n}) & \text{ for } x \in (\frac{k-1}{n}, \frac{k-1}{n}+\frac{t}{n}] \\
            y_n(\frac{k-1}{n}+\frac{t}{n})+b (x-\frac{k-1}{n} -\frac{t}{n}) & \text{ for } x \in (\frac{k-1}{n}+\frac{t}{n}, \frac{k}{n}] .
\end{cases}
$$
Now, all $y_n$ are piecewise affine and injective and converge weakly to the affine function $y(x) = (ta + (1-t)b)x$, which now may even be the zero function. Using the definition of weak lower semicontinuity again shows that 
$$
 \liminf_{n\to\infty} t f(a_n) + (1-t) f(b)  \geq  f(ta+(1-t)b)
$$
and thus relying on the continuity of $f$ in $a$, the claim. At this point, we already know that $f$ is finite in 0 and analogously to above, we show that it is continuous there, too. Now, we may again iterate and show the convexity property for all vectors when possibly replacing the zero vector by $ \begin{pmatrix}
           1/n \\
           0 \\
         \end{pmatrix}$ in the construction.

\end{proof}

As, even on the set of homeomorphisms, weak-lower semicontinuity of an integral functional on $W^{1,p}((0,1); \mathbb{R}^2)$ translates to the convexity of its density, it is natural to expect that the convex hull will also be in the relaxation of functionals taking infinite values outside the set of homeomorphisms. This is formalized as follows

\begin{lemma}[Relaxation]
\label{lemma-relax}
Let $p \in (1, \infty)$. Suppose that $f: \mathbb{R}^2 \to \mathbb{R}\cup\{+\infty\}$, $f \geq 0$, is finite and continuous everywhere except at zero, where it is infinite.  
Moreover, suppose that for every $\delta>0$, there exists $c_\delta>0$ such that it holds for all $a \in \mathbb{R}^2$, $|a|\geqslant \delta$ that
$$
f(a) \leqslant c_\delta\left(1+|a|^p\right).
$$
Then the relaxation of the integral functional
$$
\rone\mathcal{I}\EEE(y)= \begin{cases} 
\int_0^1 f (y'(x)) \mathrm{d} {x} &\quad \text{ if } {y} \text{ is a homeomorphism}, \\
+\infty &\quad \text{ else, }
\end{cases}
$$
with respect to the weak topology in $W^{1,p}((0,1); \mathbb{R}^2)$, that is the functional
\begin{equation}
\rone\mathcal{I}\EEE^\text{rel}(y) := \inf \{ \liminf_{k\to \infty} \rone\mathcal{I}\EEE(y_k); \{y_k\}_{k \in \mathbb{N}} \subset W^{1,p}((0,1),\mathbb{R}^2), y_k \rightharpoonup y \text{ in } W^{1,p}((0,1); \mathbb{R}^2)  \}
\label{wlsc-env}
\end{equation}
is given by
$$
\rone\mathcal{I}\EEE^\text{C}(y) = 
\begin{cases}
\int_0^1 \mathcal{C}f (y'(x)) \mathrm{d} {x} &\quad \text{ if } {y} \text{ is non-interpenetrative}, \\
+\infty &\quad \text{ else, }
\end{cases}
$$
where $Cf$ is the convex envelope of $f$, which is defined as 
$$\mathcal{C}f=\sup\{g:\mathbb{R}^2\to\mathbb{R} \textrm{ convex}: g\leq f\}.$$
Moreover, the sequence of homeomorphisms realizing the infimum in \eqref{wlsc-env} can be chosen as a piecewise affine.
    
\end{lemma}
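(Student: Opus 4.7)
We establish two matching inequalities $\mathcal{I}^{\text{rel}}\ge \mathcal{I}^{C}$ and $\mathcal{I}^{\text{rel}}\le \mathcal{I}^{C}$. The lower bound is direct. If $y$ is interpenetrative, then Definition \ref{def-interpen-rod} combined with the compact embedding $W^{1,p}((0,1))\hookrightarrow C^{0}([0,1])$ (valid since $p>1$) shows that no bounded sequence of homeomorphisms in $W^{1,p}$ can converge weakly to $y$; hence along every sequence $y_{k}\rightharpoonup y$ one has $\mathcal{I}(y_{k})=+\infty$ eventually, and the inequality is trivial. If $y$ is non-interpenetrative, I use $f\ge\mathcal{C}f\ge 0$ and the classical weak lower semicontinuity of $v\mapsto\int_{0}^{1}\mathcal{C}f(v'(x))\,dx$ on $W^{1,p}((0,1);\mathbb{R}^{2})$, which holds because $\mathcal{C}f$ is convex and continuous on $\mathbb{R}^{2}$ with the $p$-growth inherited from $f$ (and $\mathcal{C}f(0)<+\infty$ since $\mathcal{C}f(0)\le \tfrac12 f(a)+\tfrac12 f(-a)$ for any $a\neq 0$).

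The upper bound is the heart of the argument. For $y$ non-interpenetrative, Theorem \ref{NotParticullarlyDeep} provides piecewise affine homeomorphisms $\tilde y_{m}\to y$ strongly in $W^{1,p}$, which yields $\int_{0}^{1}\mathcal{C}f(\tilde y'_{m})\,dx\to\int_{0}^{1}\mathcal{C}f(y')\,dx$ by continuity and $p$-growth of $\mathcal{C}f$. It therefore suffices, for each $m$ and each $\varepsilon>0$, to construct a piecewise affine homeomorphism $z_{m,\varepsilon}$ with $\|z_{m,\varepsilon}-\tilde y_{m}\|_{L^{\infty}}\le\varepsilon$ and $\int_{0}^{1}f(z'_{m,\varepsilon})\,dx\le\int_{0}^{1}\mathcal{C}f(\tilde y'_{m})\,dx+\varepsilon$, and then diagonalize. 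On each affine piece of $\tilde y_{m}$ with slope $a$, I invoke Carath\'eodory's theorem to write $a=\sum_{i=1}^{3}\lambda_{i}a_{i}$ with $\sum_{i}\lambda_{i}f(a_{i})\le\mathcal{C}f(a)+\eta$ and replace that piece by a high-frequency zigzag realizing the slopes $a_{i}$ with proportions $\lambda_{i}$, following the laminate construction already used in the proof of Lemma \ref{lemma;convexity-wlsc}.

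The principal technical obstacle is preserving the global injectivity of the perturbed map. Two issues arise. First, a single zigzag may fail to be simple if the vectors $a_{i}$ happen to be colinear with each other or with $a$, so that the curve backtracks on itself. I circumvent this by a small auxiliary perturbation of the $a_{i}$ into a transversal direction, admissible at arbitrarily small energy cost thanks to the continuity of $f$ away from the origin. Second, zigzags from distinct affine pieces must not collide in $\mathbb{R}^{2}$. Since $\tilde y_{m}$ is a homeomorphism, any two non-adjacent affine pieces have a positive mutual distance in the image, and around each shared vertex I keep a short affine buffer where no oscillation takes place. By choosing the oscillation amplitude small enough (equivalently, the oscillation frequency large enough), the zigzags on distinct pieces remain confined to pairwise disjoint tubular neighborhoods, so $z_{m,\varepsilon}$ stays a simple piecewise affine curve and hence a homeomorphism onto its image. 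A routine diagonalization in $m$, $\varepsilon$, $\eta$, and the oscillation parameter then delivers the desired piecewise affine recovery sequence.
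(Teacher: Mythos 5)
Your proposal is correct and follows essentially the same route as the paper: Theorem \ref{NotParticullarlyDeep} is invoked for both bounds, the lower bound comes from $f\ge\mathcal{C}f$ together with weak lower semicontinuity of the convex integrand, and the upper bound is obtained by passing to a strong piecewise-affine approximating sequence of homeomorphisms and replacing each affine piece by high-frequency laminate-style zigzags realizing a near-optimal Carath\'eodory decomposition of $\mathcal{C}f$. The two technical fixes you describe — perturbing colinear slopes transversally at small energy cost, and keeping affine buffer zones near the vertices of $\tilde y_m$ so that the oscillations on distinct pieces stay in disjoint tubes — are precisely the devices used in the paper's construction of $\bar y_k^{n(k)}$.
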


\begin{remark}
Notice that the relaxation corresponds to the weakly lower semicontinuous envelope of the functional $\rone\mathcal{I}\EEE$. It is well known that a relaxation is a special case of the $\Gamma$-limit with a constant sequence of functionals \cite{braides}. Thus, as in the general case, we know that the relaxed functional has a minimizer which can be reached by infimizing sequences of the original functional.
\end{remark}

\begin{remark}[Attaining the convex envelope]
\label{lemma-convOsc}
Let us remind the reader that, since have that $f:\RR^2\to\mathbb{R}$ is continuous away from zero ($f(0)= +\infty$, and that $\lim_{|a|\to\infty}f(a)=+\infty$) a combination of three vectors is sufficient to calculate the convex envelope up to a small error of size $\gamma$. To be more precise, for a given $\xi\in\RR^2$, and $\gamma>0$ there are some $t^\gamma _i\ge 0$ with $\sum_{i=1}^3 t^\gamma_i=1$, and $a_i^\gamma \in\RR^2$ such that $\xi=\sum_{i=1}^3 t_ia_i^\gamma$,  and  the convex envelope $Cf$ evaluated at $\xi$  satisfies  (see \cite[Thm.~2.35]{dacorogna})
    $$Cf(\xi)+\gamma>\sum_{i=1}^3 t_i^\gamma f(a_i^\gamma )\ .$$
\end{remark}

\begin{proof}[Proof of Lemma \ref{lemma-relax}]
Let us choose $y \in W^{1,p}((0,1); \mathbb{R}^2)$ and an arbitrary sequence $\{y_k\}_{k \in \mathbb{N}} \subset W^{1,p}((0,1);  \mathbb{R}^2)$ with $y_k \rightharpoonup y$. Let us distinguish two cases: first, assume that $y$ is interpenetrative. Then, according to Theorem \ref{NotParticullarlyDeep}, there cannot exist a sequence of homeomorphisms converging weakly to $y$, for otherwise, $y$ would be non-interpenetrative. So, in this case 
$$
\rone\mathcal{I}\EEE^C(y) = \rone\mathcal{I}\EEE^\text{rel}(y) = +\infty.
$$

Let us thus concentrate on the case when $y$ is non-interpenetrative and $\{y_k\}$ is a sequence of homeomorphisms \rone converging weakly to $y$. \EEE  We then have 
$$
\liminf_{k \to \infty} \rone\mathcal{I}\EEE(y_k)  =  \liminf_{k \to \infty} \int_0^1 f(y_k')\, \mathrm{d}x \geq \liminf_{k \to \infty} \int_0^1 Cf(y_k')\, \mathrm{d}x  \geq \rone\mathcal{I}\EEE^C(y'),
$$
and since this is independent of the particular sequence, we obtain $\rone\mathcal{I}\EEE^\text{rel}(y) \geq \rone\mathcal{I}\EEE^C(y) $.

Now, we need to show the opposite inequality. To this end, we construct a sequence {\color{black} $\{\bar{y}_k^{n(k)}\}_{k \in \mathbb{N}} \subset W^{1,p}((0,1); \mathbb{R}^2)$ such that
$$
\rone\mathcal{I}\EEE^C(y) \geq \lim_{k \to \infty} \mathcal{I}(\bar{y}_k^{n(k)}).
$$}
First, as $y$ is non-interpenetrative,  we may appeal to Theorem \ref{NotParticullarlyDeep} and choose a \emph{piece-wise affine} approximating (in the strong topology) sequence $\{\bar{y}_k\} \subset W^{1,p}((0,1); \mathbb{R}^2)$ such that all $\bar{y}_k$ are homeomorphisms. Now, because 
{\color{black}
$$
\{\mathcal{C}f < +\infty\} = \mathrm{co}(\{f<+\infty\})=\mathrm{co}(\mathbb{R}^2\setminus\{0\}) = \mathbb{R}^2,
$$
where $\mathrm{co}(M)$ is the convex hull of the set $M \subset \mathbb{R}^2$, we know that $\mathcal{C}f$ is finite on $\mathbb{R}^2$ and thus continuous.
Therefore, }we can fix $\delta > 0$ and find a constant $\tilde{c}$ such that $\mathcal{C}f(a) \leq \tilde{c}$ if $|a| \leq \delta$. Moreover, for $|a| \geq \delta$ the function $\mathcal{C}f$ inherits the growth condition of $f$ so that in sum
$$
\mathcal{C}f(a) \leq c(1+|a|^p) \qquad \text{ on } \mathbb{R}^2. 
$$
Thus, we may use the continuity of Nemytskii mappings (see e.g. \cite{roubicek})  to assert that 
$$
\lim_{k \to \infty} \rone\mathcal{I}\EEE^C(\bar{y}_k) = \rone\mathcal{I}\EEE^C(y),
$$
so that when choosing a subsequence of $k$'s if necessary, we have that
$$
\rone\mathcal{I}\EEE^C(y) \geq \rone\mathcal{I}\EEE^C(\bar{y}_k)- \frac{1}{k}.
$$

Let us now fix $k$ and one $\bar{y}_k$. We can assume $\bar{y}_k$ is affine on the intervals $\left[x_k^l,x_k^{l+1}\right)$, $l=0\ldots N(k)$. We may find numbers $\beta(k)$, $n(k)$ such that 
$$
|x_k^{l+1}-x_k^l| {\color{black} >} \beta(k)  \text{ for all } l=0\ldots N(k),
$$
and, moreover, for all $x \in \left(x_k^{l}+\beta(k) ,x_k^{l+1}-\beta(k)\right)$ with $l=0\ldots N(k)$ we have that 
$$
B\Big(\bar{y}_k(x),\frac{1}{n(k)}\Big)\cap \bar{y}_k(\tilde{x}) = \emptyset \text{ for all } \tilde{x} \in (x_k^\ell,x_k^{\ell+1}) \text{ with } l\neq \ell.
$$
In other words, as long as we are at a $\beta(k)$-distance from the points $x_k^l$ we can find a neighborhood around the piecewise affine function, where we can perturb it without interfering with injectivity. That is exactly what we will do and we will construct a function {\color{black} $\bar{y}_k^{n(k)}$} by replacing $\bar{y}_k$ by 
suitable oscillations in order to attain the convex envelope. 

Let us detail the particular construction on one interval $(x_k^l, x_k^{l+1})$, where we denote $\xi := \bar{y}_k'$. Then we know from Remark \ref{lemma-convOsc} that for a given $\gamma>0$ there are some $t_{\gamma,i}\ge 0$ and $a_{\gamma,i} \in\RR^2$ satisfying $\sum_{i=1}^3 t_{\gamma,i}=1$ and $\xi=\sum_{i=1}^3 t_{\gamma,i}a_{\gamma,i}$  such that
    $$\mathcal{C}f(\xi)+\gamma>\sum_{i=1}^3 t_{\gamma,i} f(a_{\gamma,i})\ .$$
Let us define for $x\in[0,1]$ the auxiliary function
 $$
    y^\gamma(x)=\begin{cases}
        a_{\gamma,1} x & \text{ if } x\in[0,t_{\gamma,1}],\\
        a_{\gamma,2}(x-t_{\gamma,1})+a_{\gamma,1} t_{\gamma,1} &\text{ if } x\in[t_{\gamma,1},t_{\gamma,1}+t_{\gamma,2}],\\
       a_{\gamma,3}(x-t_{\gamma,1}-t_{\gamma,2})+a_{\gamma,1} t_{\gamma,1}+a_{\gamma,3} t_{\gamma,2} & \text{ if } x\in[t_{\gamma,1}+t_{\gamma,2},1],
    \end{cases}
    $$

Now, as defined, $y^\gamma$ might not be a homeomorphism at this point. This could happen if either one of the vectors $a_{\gamma,i}$ is the zero vector, or if two (or all three) of the vectors are mutually co-linear. As $f$ is infinite at 0, we may exclude the first case, but the second might still happen. If so, we may assume, without loss of generality, that $a_{\gamma,1}$ and $a_{\gamma,2}$ are co-linear and $a_{\gamma,3}$ is either non-colinear with either or aligned with $a_{\gamma,2}$ (which means that \rone  $a_{\gamma,3} = \alpha a_{\gamma,2}$ \EEE with $\alpha >0$). As $a_{\gamma,1} \neq 0$ we can find one component of $a_{\gamma,1}$ which is non-zero and without loss of generality suppose it is the first one. Then we can use the alternative definition
     $$
    y^{\gamma}(x)=\begin{cases}
        \begin{pmatrix}
           a_{\gamma,1}^1+a_{\gamma,1}^1/2 \\
           a_{\gamma,1}^2 \\
         \end{pmatrix}x & \text{ if } x\in[0,t_{\gamma,1}/2],\\  
        \begin{pmatrix}
           a_{\gamma,1}^1+a_{\gamma,1}^1/2 \\
            a_{\gamma,1}^2 \\
         \end{pmatrix}\frac{t_{\gamma,1}}{2} + \begin{pmatrix}
           a_{\gamma,1}^1-a_{\gamma,1}^1/2 \\
           a_{\gamma,1}^2 \\
         \end{pmatrix}(x-\frac{t_{\gamma,1}}{2}) & \text{ if } x\in[t_{\gamma,1}/2,t_{\gamma,1}] \\
        a_{\gamma,2}(x-t_{\gamma,1})+a_{\gamma,1} t_{\gamma,1} &\text{ if } x\in[t_{\gamma,1},t_{\gamma,1}+t_{\gamma,2}],\\
        a_{\gamma,3}(x-t_{\gamma,1}-t_{\gamma,2})+a_{\gamma,1} t_{\gamma,1} +a_{\gamma,2}t_{\gamma,2} & \text{ if } x\in[t_{\gamma,1}+t_{\gamma,2}{\color{black}, 1}],
    \end{cases}
    $$
    where we, as before, denoted vector components by upper indices.
    
    Now, $ y^\gamma$ is again a homeomorphism that equals to ${\color{black} \xi {\cdot} x}$ at the boundary of $(0,1)$.

    We now scale the found functions $y^\gamma$. {\color{black} For  $n\in\N$, we set 
    $$y^\gamma_n(x)=\frac{1}{n} y^\gamma(nx)$$  
    if $x \in [0, 1/n]$ and
    \begin{align*}
     y^\gamma_n(x)& =y^\gamma_n(x-(i-1)/n) +y^\gamma_n((i-1)/n), 
    \end{align*}
 if $x \in [(i-1)/n, i/n]$, for $2 \leq i \leq n$.}   
Then   $y^\gamma_n \rightharpoonup \xi\cdot x$ and {\color{black} $y^\gamma_n(0)=0$ and $y^\gamma_n(1)=\xi$}.
    
    Now, a calculation analogous to \eqref{necessity-of-convexity}, the fact that $\mathcal{C}f\le f$, and Lemma~\ref{lemma;convexity-wlsc}  show that 
    $$\mathcal{C}f(\xi)=\int_0^1\mathcal{C}f(\xi)\,\mathrm{d}x\le \lim_{n\to\infty}\int_0^1 f((y^\gamma_n)')\,\mathrm{d}x<\int_0^1\mathcal{C}f(\xi)\,\mathrm{d}x+\gamma=\mathcal{C}f(\xi)+\gamma .$$
    As $\gamma>0$ was arbitrary 
     a uniform bound on $(y^\gamma_n)'$ implied by coercivity of $f$ allows us to extract a diagonal sequence for {\color{black} $\gamma(n)\to 0$ and $n\to\infty$ } such that  
    $$\mathcal{C}f(\xi)=\int_0^1\mathcal{C}f(\xi)\,\mathrm{d}x= \lim_{n\to\infty}\int_0^1 f((y^{\gamma(n)}_n)')\,\mathrm{d}x,$$
    as well as 
    $$\mathcal{C}f(\xi)\geq \int_0^1 f((y^{\gamma(n)}_n)')\,\mathrm{d}x - \frac{1}{n}.$$
    
We now set 
$$
\bar{y}_k^n(x) = \begin{cases}
    \bar{y}_k(x) & \text{ if } x \in  
    [x_k^{l}, x_k^l+\beta(k)] \text{ or } [x_k^{l+1}-\beta(k), x_k^{l+1})
    \\
    \bar{y}_k(x_k^l  +\beta(k)) \\ \quad+(x_k^{l+1}-x_k^l-2\beta(k))y^{\gamma(n)}_n\big(\frac{x-x_k^l-\beta(k)}{x_k^{l+1}-x_k^l-2\beta(k)} \big)  &\text{ if } x \in  (x_k^l  +\beta(k), x_k^{l+1}-\beta(k))
\end{cases}
$$
and by a change of variables we obtain that
\begin{align*}
\int_{x_k^l  +\beta(k)}^{x_k^{l+1}-\beta(k)} \mathcal{C}f(\bar{y}_k') \mathrm{d}x &= \int_{x_k^l  +\beta(k)}^{x_k^{l+1}-\beta(k)} \mathcal{C}f(\xi) \mathrm{d}x = (x_k^{l+1}-x_k^l-2\beta(k)) \mathcal{C}f(\xi) \\&\geq (x_k^{l+1}-x_k^l-2\beta(k)) \int_0^1 f((y^{\gamma(n)}_n)')\,\mathrm{d}x - \frac{(x_k^{l+1}-x_k^l-2\beta(k))}{n} \\ &= \int_{x_k^l  +\beta(k)}^{x_k^{l+1}-\beta(k)} f([\bar{y}_k^n]') \mathrm{d} x -  \frac{(x_k^{l+1}-x_k^l-2\beta(k))}{n}.
\end{align*}
\rone Then  $\bar{y}_k^{n(k)}$ with $n(k)\to\infty$ for $k\to\infty$  is a   sought recovery sequence because \EEE
\begin{align*}
\rone\mathcal{I}\EEE^C(y) &\geq \rone\mathcal{I}\EEE^C(\bar{y}_k)- \frac{1}{k} =
\sum_{l=0}^{N(k)} \int_{x_k^l}^{x_k^{l+1}} \mathcal{C}f(\bar{y}_k') \mathrm{d}x  - \frac{1}{k} \geq \sum_{l=0}^{N(k)} \int_{x_k^l+\beta(k)}^{x_k^{l+1}-\beta(k)}   \mathcal{C}f(\bar{y}_k') \mathrm{d}x -  \frac{1}{k} \\
& \geq \sum_{l=0}^{N(k)} \int_{x_k^l+\beta(k)}^{x_k^{l+1}-\beta(k)}   f([\bar{y}_k^{n(k)}]') \mathrm{d} x - \frac{1-2\beta(k) N(k)}{n(k)}  - \frac{1}{k} \\ &\geq   \sum_{l=0}^{N(k)} \bigg(\int_{x_k^l}^{x_k^{l+1}} f([\bar{y}_k^{n(k)}]')\mathrm{d} x - \beta(k) f({\color{black} \bar{y}_k}'(x_k^l))- \beta(k) f({\color{black} \bar{y}_k}'(x_k^{l+1})) \bigg)  - \frac{1-2\beta(k) N(k)}{n}  - \frac{1}{k}\\
&\geq   \sum_{l=0}^{N(k)} \bigg(\int_{x_k^l}^{x_k^{l+1}} f([\bar{y}_k^{n(k)}]')\mathrm{d} x - 2\beta(k) \int_{x_k^l}^{x_k^{l+1}} f({\color{black} \bar{y}_k}'(x)) \mathrm{d} x \bigg)  - \frac{1-2\beta(k) N(k)}{n(k)}  - \frac{1}{k} \\
&\geq \rone\mathcal{I}\EEE(\bar{y}_k^{n(k)}) - {\color{black}2}\beta(k) \rone\mathcal{I}\EEE({\color{black} \bar{y}_k})- \frac{1}{n(k)}  - \frac{1}{k},
\end{align*}
{\color{black} where in the last line we omitted the the positive term $2\beta(k) N(k)/n$.}
Now letting $k$ to infinity, $\rone\mathcal{I}\EEE({\color{black} \bar{y}_k})$ may diverge but as we have the freedom to choose $\beta(k)$ small enough, we can still assure that  $\beta(k) \rone\mathcal{I}\EEE({\color{black} \bar{y}_k})$ converges to zero, which proves the claim.
\end{proof}

\begin{lemma}[Tubular lemma]
\label{tubular-lemma-1}
Let $y \in W^{1,p}((0,1); \mathbb{R}^2)$ be injective and Lipschitz. Let   $b \in C^1((0,1);\mathbb{R}^2)$ be such that $\mathrm{det}(y'|b) \geq \delta$. Then, we can find a number $h = h(L,\delta, \|b\|_{C^1((0,1);\mathbb{R}^2)})$ such that 
\begin{align*}
\rone y_h: \EEE (0,1) \times [-h/2,h/2] \to \mathbb{R}^2 \ni (x^1,x^2) \mapsto y(x^1) +  x^2 b(x^1) 
\end{align*} 
is a homeomorphism.
 \end{lemma}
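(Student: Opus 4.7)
My plan proceeds in three parts: controlling the Jacobian, establishing local injectivity in narrow horizontal strips, and finally using the global injectivity of $y$ to exclude self-intersection at large horizontal distances.

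First, since $y$ is Lipschitz and $b\in C^1$, the map $y_h$ is Lipschitz and for a.e.\ $(x^1,x^2)$ in the domain we have $\nabla y_h(x^1,x^2)=(y'(x^1)+x^2 b'(x^1)\,|\,b(x^1))$, so
$$
\det\nabla y_h(x^1,x^2)=\det(y'(x^1)\,|\,b(x^1))+x^2\det(b'(x^1)\,|\,b(x^1))\geq \delta-\tfrac{h}{2}\|b\|_{C^1}^2.
$$
Hence for $h$ small enough, say $h\leq h_0$ with $h_0$ depending only on $\delta$ and $\|b\|_{C^1}$, the Jacobian stays above $\delta/2$ almost everywhere. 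This will be used essentially only to justify the perpendicular-direction estimate in the next step.

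For the local injectivity step, I would assume $y_h(x^1,x^2)=y_h(\tilde x^1,\tilde x^2)$ with $|x^1-\tilde x^1|$ small. Testing the identity
$$
y_h(x^1,x^2)-y_h(\tilde x^1,\tilde x^2)=\int_{\tilde x^1}^{x^1}\bigl(y'(s)+x^2 b'(s)\bigr)\,ds+(x^2-\tilde x^2)\,b(\tilde x^1)
$$
against the vector $b(\tilde x^1)^\perp$ kills the last term. Since $\det(y'(s)\,|\,b(s))\geq \delta$ and $b\in C^1$, writing $\det(y'(s)\,|\,b(\tilde x^1))=\det(y'(s)\,|\,b(s))+\det(y'(s)\,|\,b(\tilde x^1)-b(s))$ shows that, provided $|s-\tilde x^1|\leq \rho$ for some $\rho=\rho(L,\delta,\|b\|_{C^1})>0$ and $|x^2|$ is small, the integrand $(y'(s)+x^2 b'(s))\cdot b(\tilde x^1)^\perp$ keeps a fixed sign with absolute value at least $\delta/4$. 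The integral therefore cannot vanish unless $x^1=\tilde x^1$. Once $x^1=\tilde x^1$ is forced, the original equality reduces to $(x^2-\tilde x^2)\,b(x^1)=0$, and $b(x^1)\neq 0$ (again because $\det(y'|b)\geq \delta$) gives $x^2=\tilde x^2$. So there exist $\rho>0$ and $h_1\leq h_0$, depending only on $L$, $\delta$ and $\|b\|_{C^1}$, such that $y_h$ is injective on every slab $(x^1_0-\rho,x^1_0+\rho)\times[-h/2,h/2]$ for $h\leq h_1$.

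For the global step, which I expect to be the main obstacle, we must exclude folding at scales larger than $\rho$. Here the only remaining information is the global injectivity of $y$ on $[0,1]$: any identification $y_h(x^1,x^2)=y_h(\tilde x^1,\tilde x^2)$ forces $|y(x^1)-y(\tilde x^1)|\leq h\|b\|_\infty$, and by uniform continuity of $y^{-1}$ on the compact Jordan arc $y([0,1])$ we get $|x^1-\tilde x^1|\leq \omega_y(h\|b\|_\infty)$, where $\omega_y$ is the modulus of $y^{-1}$; choosing $h$ small enough that $\omega_y(h\|b\|_\infty)<\rho$ contradicts the case excluded in the previous paragraph. Continuity of $y_h^{-1}$ then follows by extending $y$ continuously to $[0,1]$ and invoking the standard fact that a continuous injection from a compact set into a Hausdorff space is a homeomorphism onto its image. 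The delicate point, and the reason the lemma's dependence on $y$ is really through $\omega_y$ (which is why the purely metric dependence $h=h(L,\delta,\|b\|_{C^1})$ in the statement is best read as incorporating this modulus), is that a merely Lipschitz injective planar curve may come arbitrarily close to itself without intersecting, so no bound on $L$, $\delta$ and $\|b\|_{C^1}$ alone can control when the thickening first self-intersects at long distances along the curve.
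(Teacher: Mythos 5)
Your proof follows the same three-step skeleton as the paper's: bound the Jacobian $\det\nabla y_h$ away from zero, establish injectivity on narrow slabs $|x^1-\tilde x^1|<\rho$, and then exclude distant self-intersections via the (uniform) continuity of $y^{-1}$ on the compact image. So the overall route is the one in the paper, but two of your refinements are worth noting. First, your local-injectivity step (splitting off the $b$-direction by projecting the difference onto $b(\tilde x^1)^\perp$) only uses $y'\in L^\infty$ together with continuity of $b$; the paper instead estimates $\nabla y_h(\varphi(s))-\nabla y_h(\xi,0)$ by $\|\nabla y_h\|_{L^\infty}\sqrt{h_1^2+4\gamma^2}$, which tacitly presumes some continuity of $y'$ and is not justified for the piecewise-affine $\bar y_k$ to which the lemma is actually applied in Theorem~\ref{thm-dimRed}; your version is therefore cleaner and closes a genuine gap. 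Second, your observation that $h$ must depend on the modulus of continuity of $y^{-1}$ is correct: the paper's proof uses exactly such a quantity (the constant $\alpha$ giving $|y(x_1)-y(x_2)|\ge\alpha$ whenever $|x_1^1-x_2^1|\ge 2\gamma$), but this does not appear in the stated parameter list $h(L,\delta,\|b\|_{C^1})$, which is indeed slightly imprecise. In short: same approach, with a more careful local step and an accurate reading of the hidden dependence of $h$ on $y$.
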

 
 \begin{proof}
\rone From its definition, the function $y_h$ is Lipschitz and we may find $h_0 > 0$ such that 
 $$\mathrm{det}(\nabla y_h(x^1,x^2)) = \mathrm{det}(y'+x^2b'|b) \geq \delta/2$$ 
 for all $h \leq h_0$. \EEE

 Moreover, we can find find $h_1 \leq h_0$ (if necessary) and a number $\gamma$ such that for all $h \leq h_1$ and all $\xi \in (0,1)$ we have that 
 $$
 |y_h(x_1)-y_h(x_2)| \geq \frac{\delta}{4} |x_1-x_2| \quad \text{ whenever } |x_1^1-\xi| \leq \gamma \text{ and } |x_2^1-\xi| \leq \gamma.
 $$
Indeed, taking $x_1$ and $x_2$ in $(0,1) \times [-h/2,h/2]$, and letting $\varphi(t) = x_1 + t(x_2-x_1)$, we may write
\begin{align*}
 |y_h(x_1)-y_h(x_2)| &= \Big|\int_0^1 \nabla y_h(\varphi(s)) \varphi'(s) \mathrm{d} s \Big | 
 \\ &=
 \Big|\int_0^1 \nabla y_h(\xi,0) (x_2-x_1)  + \nabla \big(y_h(\varphi(s))-y_h(\xi,0)\big) (x_2-x_1) \mathrm{d} s \Big | \\
 &\geq \bigg(\frac{\delta}{2}  - {\color{black}\|\nabla y_h\|_{L^\infty((0,1) \times [-h/2,h/2]; \mathbb{R}^2)}} \sqrt{h_1^2+4\gamma^2}\bigg) |x_2-x_1| \\
 &\geq \frac{\delta}{4} |x_2-x_1|,
\end{align*}
where the last inequality is by choosing $h_1$ and $\gamma$ small enough. 

Particularly, we have a kind of uniform local injectivity implying that whenever $x_1, x_2$ are such that $|x_1^1-x_2^1| \leq 2 \gamma$, $y_h(x_1) \neq y_h(x_2)$ for all $h \leq h_1$.

Finally, let us ensure that $y_h$ is also globally injective which will ensure the claim. To this end, we may need to choose $h$ once again smaller if necessary. Indeed, as $y$ is a \emph{globally} injective mapping, there has to be a number $\alpha > 0$ such that for any two $x_1,x_2$ with $|x_1^1-x_2^1| \geq 2 \gamma$ we have that $|y(x_1)-y(x_2)| \geq \alpha$.
Thus, choosing $h_3 \leq h_2$ and $h_3 < \frac{\alpha}{4\|b\|_{C^0((0,1);\mathbb{R}^2)}}$ yields that $y_h$ is a global homeomorphism for all $h \leq h_3$. 
\end{proof}

\begin{remark}[Smoothing and the positivity of the Jacobian]
\label{smoothing-det}
Let us assume that we have a piece-wise affine homeomorphism $y_k(x)$ such that \rone  $y_k = a_k^l$ \EEE on $(x_k^l, x_k^{l+1})$, $l=0 \ldots N(k)$, with $a_k^l$ some given vectors. Assume moreover that we are given the piece-wise constant function $b_k = \xi_{a_k^l}$ on $(x_k^l, x_k^{l+1})$, $l=0 \ldots N(k)$ such that 
$$
\mathrm{det}(a_k^l|\xi_{a_k^l}) \geq 2\delta,
$$
then we can find a smooth functions ${\bf b}_{k,i}$ that converge strongly (as $i \to \infty$) in $L^p((0,1); \mathbb{R}^2)$ to the piece-wise constant function $b_k$ and a number $\varepsilon = \varepsilon(y_k,\delta)$ 
\begin{equation}
\label{pos-det-rem}
\mathrm{det}(y_k'(x)|b_{k,i}) \geq \varepsilon. \qquad \text{ for a.a. $x \in (0,1)$},
\end{equation}
Let us first notice that applying the standard mollifier by convex averaging may fail in providing the right approximants. Indeed, take the situation as depicted in Figure \ref{neg-det-fig}. Then, applying the standard convex averaging will result in the vector $b_{k,i}$ corresponding to the dashed arrow somewhere near the corner, and thus, the orientation will change. 
\begin{figure}
    \centering
    \includegraphics[scale=0.6]{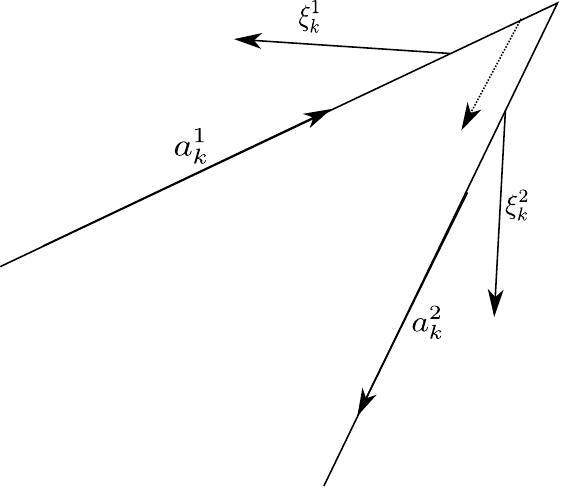}
    \caption{An illustration of how convex averaging might change the orientation.}
    \label{neg-det-fig}
\end{figure}
However, we can fix the smoothing process by introducing the following procedure: We introduce the intermediate vector $\zeta_k^l$ defined as 
$$
\zeta_k^l = 
\begin{cases}
    -a_k^l+a_k^{l+1} & \text{ if } \mathrm{det}(a_k^l|a_k^{l+1}) >0, \\
    a_k^l-a_k^{l+1} & \text{ if } \mathrm{det}(a_k^l|a_k^{l+1}) < 0 \\
    \xi_{a_k^l}  & \text{ if } \mathrm{det}(a_k^l|a_k^{l+1}) = 0
\end{cases}
$$
Now, we define the vector 
$$
b_{k,i}(x) = \begin{cases}
    \zeta_k^l & \text{ if } x \in (x_k^{l+1}-\frac{1}{i}, x_k^{l+1}+\frac{1}{i}), l=0\ldots N(k)-2 \\
    \xi_{a_k^l} & \text{ if } x \in (x_k^l+\frac{1}{i},x_k^{l+1}-\frac{1}{i}) \text{ for } l=1...N(k)-1 \\
    \xi_{a_k^0} &\text{  for } x \in (0,x_k^0-\frac{1}{i})  \\
    \xi_{a_k^{N(k)}} &\text{ for }  x \in (x_k^{N(k)}+\frac{1}{i},1),
\end{cases}
$$
so that basically we only alter the original piece-wise constant function $b_k$ near the corners. At this point, for the construction to make sense, we need to assume $i$ large enough (as compared to $\mathrm{min}_l |x_k^{l+1}-x_k^l|$) and eventually, as $i \to \infty$, we will have changed it only on a negligible set. 

This alternation has the purpose to now be able to apply the standard mollifier with the diameter of its kernel smaller than $\frac{1}{2i}$ because
$\det(a_k^l|\zeta_k^l) > 2\varepsilon$ and $\det(a_k^{l+1}|\zeta_k^l) > 2\varepsilon$. Indeed, if $\mathrm{det}(a_k^l|a_k^{l+1}) >0$ we have that 
\begin{align*}
\det(a_k^l|\zeta_k^l) &= \det(a_k^l|-a_k^l+a_k^{l+1}) = \det(a_k^l|a_k^{l+1}) \\
\det(a_k^{l+1})|\zeta_k^l) &= \det(a_k^{l+1})|-a_k^l+a_k^{l+1}) = \det(a_k^l|a_k^{l+1}),
\end{align*}
and the calculation is analogous in the case when $\mathrm{det}(a_k^l|a_k^{l+1}) <0$ while if this determinant is zero $a_k^l$ and $a_k^{l+1}$ are aligned and thus $\det(a_k^{l+1}|\xi_k^l)$ is also positive. As $k$ is finite, we can find a minimum value (that only depends on $y_k$ and $\delta$) so that we see that
$$
\det(a_k^l|\zeta_k^l) > \tilde{\varepsilon} \qquad \text{for all } l=0 \ldots k-1.
$$
After this modification, the convex averaging translates to the determinant because the determinant is multi-linear (and so convex averaging of the second column translates to convex averaging of the determinant) and thus
$$
\mathrm{det}(y_k'(x)|b_{k,i}) \geq \mathrm{min}(\tilde{\varepsilon} , \delta).
$$
\end{remark}

\begin{proof}[Proof of Theorem \ref{thm-dimRed}]  
To  prove the veracity of the first point of Definition~\ref{GammaConvDef}, let us take a sequence $\{y_h\}_{h>0} \subset W^{1,p}(\Omega, \mathbb{R}^2)$ such that $\pi(y_h) \to y$ in $L^p((0,1); \mathbb{R}^2)$. If $\liminf_{h\to 0 }J_h(y_h) = +\infty$, the statement is automatic. Thus, we may, without loss of generality restrict ourselves to a subsequence of $h$'s (not relabelled) such that $J_h(y_h) \leq C$. Thus all $y_h$'s are homeomorphisms on $\Omega$.

As the energy $W$ in \eqref{bulkFunct} satisfies the coercivity 
$$
W(F) \geq c |F|^p,
$$
we know that the sequence $\{y_h\}$ under question is actually bounded in $W^{1,p}(\Omega; \mathbb{R}^2)$ and thus, we may select a non-relabeled sub-sequence such that 
$$
y_h \rightharpoonup \tilde{y}  \text{ in } W^{1,p}(\Omega; \mathbb{R}^2)
$$
for some \rone $\tilde y\in W^{1,p}(\Omega; \mathbb{R}^2)$. \EEE
However, we can  even show a stronger statement: indeed, due to the scaling in the second variable, we have that
$$
\int_\Omega |\partial_1 y_h|^p+\frac{1}{h^p}|\partial_2 y_h|^p \mathrm{d}x \leq C,
$$
so that $\partial_2 y_h \to 0$ strongly in $L^p(\Omega; \mathbb{R}^2)$. This, in particular, means that $\tilde{y}$ is constant in the second variable so that we can identify 
$$
 \tilde{y}(x,0) = \lim_{h\to 0} \pi(y_h) = y(x).
$$

 In the case $p>2$, we know that the functions $\{y_h\}$ in the sequence are uniformly H\"older-continuous, and thus the same holds for the restrictions 
\rone $y_h^\mathrm{r}=y_h(\cdot, 0)$. \EEE In the case $p=2$,  we apply \cite[Corollary~7.5.1]{IM} to get a uniform modulus of continuity for the sequence and its limit \rone obtained by the Arzel\'{a}-Ascoli theorem. \EEE Therefore, the limiting function $y$  is non-interpenetrative by definition.

Now we have that
$$
    \begin{aligned}
    \liminf_{h \to 0} J_h(y_h) &= \liminf_{h \to 0} \int_\Omega W (\partial_1 y_h(x)| h^{-1} \partial_2 y_h(x) ) \mathrm{d} {x}\\
    &\geq  \int_0^1 \int_{-1/2}^{1/2} \liminf_{h \to 0} W_0(\partial_1 y_h(x)) \mathrm{d} x \geq J(y),
\end{aligned}
$$ 
where $W_0(a) = \mathcal{C}\min_{\xi \in \mathbb{R}^2} W(a| \xi)$, the first inequality is due to Fatou's lemma, the pointwise inequality $W(a|\zeta) \geq W_0(a)$ satisfied for all $\zeta \in \mathbb{R}^2$. The last inequality is by the uniform convergence mentioned above; recall here also that the the rod energy functional $J(y)$ is given in \eqref{rodFunct}.
      
Now, we need to construct the recovery sequence. To this end, let us fix $y \in W^{1,p}((0,1); \mathbb{R}^2)$. If $y$ is interpenetrative  $J(y) = +\infty$ so that any sequence approximating $y$ (e.g. as in standard dimension reduction arguments) would do as a recovery sequence.

Let us thus concentrate on the case when $y$ is non-interpenetrative. In this case, want to appeal to Lemma \ref{lemma-relax} and thus define the auxiliary function
\rone
\begin{equation}
\label{f-def}
f(a) := \min_{\xi\in\mathbb{R}^2} W(a|\xi).    
\end{equation} \EEE

Notice that, as $W$ is {\color{black} $p$-coercive} and continuous, the minimum can always be found. 
Notice also that, as for every {\color{black}$0 \neq a \in \mathbb{R}^2$}, the matrix
$$
\begin{pmatrix}
           a_1 & -a_2 \\
           a_2 & a_1 \\
         \end{pmatrix}
$$
has positive determinant, $f$ is finite everywhere in $\mathbb{R}^2$ except in $0$, where  it is infinite.
Then, owing to  Lemma \ref{lemma-relax}, we find a sequence of \emph{piece-wise} affine homeomorphisms $\{\bar{y}_k\} \subset W^{1,p}((0,1); \mathbb{R}^2)$ such that 
\begin{equation}
\label{eq-recov1}
 J(y) \geq \int_0^1 f(\bar{y}_k') \mathrm{d} x - \frac{1}{k}  .
\end{equation}
Let us now fix $k$ and one $\bar{y}_k$. We can assume $\bar{y}_k$ is affine on the intervals $(x_k^l,x_k^{l+1})$, $l=0\ldots N(k)$. Then, we can further continue rewriting \rone the right-hand side of\EEE \eqref{eq-recov1} as 
$$
\int_0^1 \min_{\xi\in \mathbb{R}^2} W(\bar{y}_k'|\xi) \mathrm{d} x - \frac{1}{k} = \sum_{l=0}^{N(k)} \int_{x_k^l}^{x_k^{l+1}}  \min_{\xi\in \mathbb{R}^2} W(a_k^l|\xi) \mathrm{d} x - \frac{1}{k},
$$
where $a_k^l = [\bar{y}_k]'$ on $(x_k^l,x_k^{l+1})$. Let us denote the minimizer in \eqref{f-def} as $\xi_{a_k^l}$ and take the piece-wise constant function $b_k = \xi_{a_k^l}$ on $(x_k^l,x_k^{l+1})$, $l=0\ldots N(k)$. Notice that for some constant $\delta=\delta(k)$, we have that
$$
{\color{black}\mathrm{det}(a_k^l|\xi_{a_k^l})} \geq 2\delta
$$
for all $l=0\ldots N(k)$.

We now apply Remark \ref{smoothing-det} to $b_k$ in order to define a smooth function $b_{k,i}$ ($i \in \mathbb{N})$ such that $\mathrm{det}([a_k^l]|b_{k,i}) \geq \varepsilon(k)$ for all $x \in (0,1)$. Moreover, as the $W$ is $p$-coercive, it is evident that $b_k \in L^p((0,1); \mathbb{R}^2)$ and thus we can assume that $b_{k,i} \to b_k$ strongly in $L^p((0,1); \mathbb{R}^2)$ (as  $i \to \infty$), whence, when appealing also to the $p$-growth of $W$ (with a constant $C_\varepsilon$) on the set where $\det(F) \geq \varepsilon$ and thus to continuity of the Nemytskii operator (see \cite[{\color{black} Sect. 1.3}]{roubicek}), we may choose $i \geq i_0(k)$ such that 
$$
\sum_{l=0}^{N(k)} \int_{x_k^l}^{x_k^{l+1}}  \min_{\xi\in \mathbb{R}^2} W([a_k^l]|\xi) \mathrm{d} x - \frac{1}{k} \geq \sum_{l=0}^{N(k)} \int_{x_k{k}^l}^{x_k^{l+1}}  W([\bar{y}_k]'|b_{k,i}) \mathrm{d} x - \frac{2}{k}
$$

At this point, we can use Lemma \ref{tubular-lemma-1} to define for $h \leq h_0(i,k)$ a homeomorphism $y_{h,k,i}(x_1,x_2) = \bar{y}_k(x_1) + x_2b_{k,i}$. Using the  $p$-growth of $W$ (with a constant $C_\delta$) on the set where $\det(F) \geq \delta$ and thus to Nemytskii continuity again, we may choose $h_0$ smaller if necessary to see that
$$
J(y) \geq \sum_{l=0}^{N(k)} \int_{x_k^l}^{x_k^{l+1}} W([\bar{y}_k]'|b_{k,i}) \mathrm{d} x - \frac{2}{k} \geq \frac{1}{h} \int_{-h/2}^{h/2} W(\nabla y_{h,k,i}) \mathrm{d} x -  \frac{3}{k},
$$
which yields the desired recovery sequence and thus the claim.
\end{proof}

\section{Proof of Theorem \ref{NotParticullarlyDeep}}

In this Section, we prove Theorem \ref{NotParticullarlyDeep}. To this end, introduce some additional notation to simplify the writing later on. We start by setting
$$
\I:=\bigl\{\phi:(0,1)\to\er^2:\  \phi \in W^{1,p}((0,1);\er^2) \text{ such that } \phi \text{ is continuous and one-to-one}\bigr\}
$$
and the weak and strong closures of the set
$$
\begin{aligned}
\Is&:=\bigl\{\phi:(0,1)\to\er^2:\ \exists \{\phi_k\}_{k \in \mathbb{N}}\subset \I \text{ such that }\phi_k\to \phi\text{ strongly in }W^{1,p}((0,1);\er^2)\bigr\}, \\
\Iw&:=\bigl\{\phi:(0,1)\to\er^2:\ \exists \{\phi_k\}_{k \in \mathbb{N}}\subset \I  \text{ such that }\phi_k\rightharpoonup \phi\text{ weakly in } W^{1,p}((0,1),\er^2)\bigr\}.\\
\end{aligned}
$$

With this notation, we reformulate Theorem \ref{NotParticullarlyDeep} as 

\vspace{1ex}

\noindent{\bf Theorem~\ref{NotParticullarlyDeep} (reformulated).}{\it 
	Let $p\in[1,\infty)$ and $\phi \in W^{1,p}((0,1); \er^2)$. The following conditions are equivalent
	\begin{enumerate}
		\item $\phi\in \Is$. 
		\item $\phi\in \Iw$. 
		\item There are $\tilde{\phi}_k\in\I$ that are equicontinuous and $\tilde{\phi}_k\sto \phi$ on $(0,1)$, where $\sto$ denotes the uniform convergence.
	\end{enumerate}   
Moreover, the closure $\Is$ stays the same even if we only admit piece-wise affine sequences.  
}

\vspace{1ex}

	\begin{remark}
		In Theorem~\ref{NotParticullarlyDeep}, we allow for the weak convergence $\phi_k \deb\phi$ in $W^{1,1}((0,1);\er^2)$  but we assume that $\phi\in W^{1,1}((0,1);\er^2)$ and hence we obtain the uniform integrability of $\phi_k'$. A similar characterization for $w^*$, strict or area-strict limits in $\mathrm{BV}((0,1);\er^2)$ would be more complicated as the limit class is not contained in $\mathcal{C}^0((0,1);\er^2)$ which we illustrate in Example~\ref{simple}. Continuity is a  natural assumption for us as discontinuous curves may pass ``through'' themselves quite often. 
	\end{remark}

\begin{example}\label{simple} We define the continuous piece-wise linear injective functions $h_k:(0,1) \to \er^2$ as follows. Let $h_k(0):=0$ and 
$$
h_k'(x):=
\begin{cases}
\frac{1}{k}&\text{ for }x\in[0,\tfrac{1}{2}-\tfrac{1}{k}]\cup[\tfrac{1}{2}+\tfrac{1}{k},1],\\
\frac{k}{2}&\text{ for }x\in[\tfrac{1}{2}-\tfrac{1}{k},\tfrac{1}{2}+\tfrac{1}{k}].\\
\end{cases}
$$
Then we define $\phi_k(x):=[h_k(x),0]$. This sequence is bounded in $W^{1,1}((0,1);\er^2)$ and so $\phi_k \debst \phi$ in $BV((0,1);\er^2)$ where 
$$
\phi(x):=
\begin{cases}
[0,0]&\text{ for }x\in[0,\tfrac{1}{2}],\\
[1,0]&\text{ for }x\in[\tfrac{1}{2},1].\\
\end{cases}
$$
but $\phi$ is not continuous.
\end{example}

The rest of this section will be devoted to proving Theorem \ref{NotParticullarlyDeep}, with the main difficulty stemming from the construction part. This means that, being given a function $\varphi \in W^{1,p}((0,1); \mathbb{R}^2)$ such that it can be approximated by homeomorphisms in the $C^0$-norm, we need to show that we can construct piece-wise affine injective approximations of $\varphi$ in the strong topology of $W^{1,p}((0,1); \mathbb{R}^2)$. Before embarking on the technical details of the proof of Theorem \ref{NotParticullarlyDeep}, we give here (heuristically) the main ideas of the proof.

First, we introduce a grid, as is common in constructing piece-wise affine approximations. Importantly, however, the grid is chosen in the \emph{image}, so in a sense we partition $\varphi((0,1))$. This gives us better control over the possible non-injectivities. Also, we will need to choose the grid carefully (and call it then a good arrival grid, see Definition \ref{good arrival grid}), so that, near each crossing point of $\varphi((0,1))$ and the grid, the function $\varphi$ behaves essentially like an affine function that passes from one rectangle of the grid to another. See Figure \ref{grid}, where one possible grid is shown. Also, we refer to Figure \ref{goodgrid}, where the choice of the good arrival grid is illustrated in more detail.

\begin{figure}[h t p]
{\begin{center}
     {\includegraphics[width=0.9\textwidth]{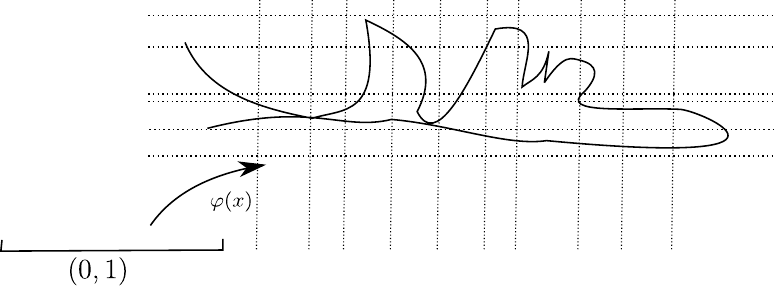}}
  \end{center}	
\caption{An illustration of the partitioning for a given functions $\varphi$. The partitioning is performed in the deformed configuration by means of a so-called \emph{good arrival grid}. }\label{grid}
}
\end{figure}

Second, we have a closer look at the intersection points of $\varphi((0,1))$ with the grid, we call one such point $z$. Because we chose the grid well, we know that the set $\varphi^{-1}(z)$ will have at most \emph{finitely} many points. Our first goal will be to tear these possible non-injectivities apart on a sufficiently small neighborhood of $z $; indeed, due to the choice of the grid, we know that $\varphi$ cannot oscillate too much in the vicinity of $z$, so that in particular, there is some distance to another cross-point or vertex of the grid in order to "fix" the non-injectivities. To do that, we use the approximating sequence of homeomorphisms. In fact, we have to shift the points on the grid everywhere, not just when non-injectivities happen, to obtain an injective function later. But we omit this in the illustration in Figure \ref{grid2}. The construction is detailed in \emph{Step 1 of the Proof of Theorem \ref{NotParticullarlyDeep}.}
\begin{figure}[h t p]
{\begin{center}
     {\includegraphics[width=0.9\textwidth]{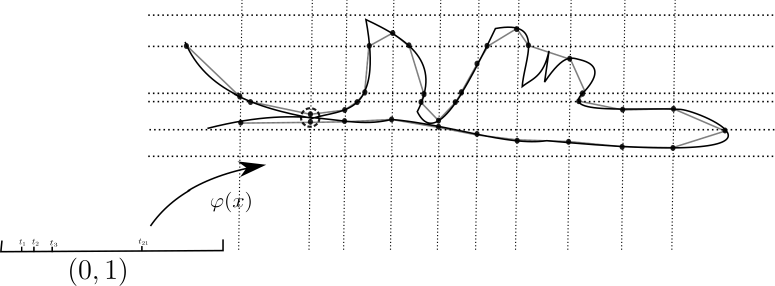}}
  \end{center}	
\caption{An illustration of injectifying on the grid in one point and piece-wise affine approximation which is illustrated in grey. }\label{grid2}
}
\end{figure}

Once torn apart, we just connect the individual intersecting points by a piecewise affine function. Here we need to be careful that this does not interfere with injectivity. But  it will come in handy that the injectivity discussion can actually be separated into the individual rectangles where we then can use topological arguments. This is detailed in \emph{Step 2 of the Proof of \ref{NotParticullarlyDeep}.}

\rone
Let us stress that it will turn out to vital at several points in the proof that the target of the mapping $\varphi$ is $\mathbb{R}^2$ and not $\mathbb{R}^3$. To mention one, we use topological argument restricted to the plane which roughly can be described as follows: If we are given a continuous injective function and whose graph passes though a rectangle (perhaps several times) and if we connect the two points where it enters and leaves the rectangle by a segment, the the segments obtained in this way will not cross.
\EEE

Let us now formalize the arguments. As outlined, we start the discussion with some preliminary results and the definition of a good arrival grid. First, we recall that in dimension one the weak convergence in $ W^{1,1}((0,1);\er^2)$ translates to uniform convergence. This is well-known (see e.g. the results in \cite{Le}) but we re-prove it here for the sake of completeness. 

\begin{lemma}\label{uniform}
Let $\phi,\phi_k\in W^{1,1}((0,1);\er^2)$ be such that $\phi_k \deb \phi$ in $W^{1,1}((0,1);\er^2)$. Then $\phi_k\sto \phi$ on $(0,1)$, where $\sto$ denotes the uniform convergence. 
\end{lemma}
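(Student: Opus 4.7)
The plan is to combine the Dunford--Pettis characterization of weakly relatively compact sets in $L^1$ with the Arzelà--Ascoli theorem. Weak convergence in $W^{1,1}((0,1);\er^2)$ yields, in particular, weak convergence of the derivatives $\{\phi_k'\}$ in $L^1((0,1);\er^2)$, and the Dunford--Pettis theorem forces this family to be equi-integrable. The fundamental theorem of calculus will then convert equi-integrability into equicontinuity of $\{\phi_k\}$ on $[0,1]$, after which uniform convergence will follow from a standard compactness argument.

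More concretely, given $\epsilon>0$, the Dunford--Pettis theorem produces $\delta>0$ such that
$$\sup_k \int_E |\phi_k'(t)|\,\mathrm{d}t < \epsilon \qquad \text{whenever } |E|<\delta.$$
Since each $\phi_k$ admits an absolutely continuous representative, the identity $\phi_k(x)-\phi_k(y)=\int_y^x \phi_k'(t)\,\mathrm{d}t$ applied on intervals of length less than $\delta$ gives $|\phi_k(x)-\phi_k(y)|<\epsilon$ uniformly in $k$, so $\{\phi_k\}$ is equicontinuous on $[0,1]$. Equiboundedness in $C^0$ is automatic from the continuous embedding $W^{1,1}((0,1);\er^2)\hookrightarrow C^0([0,1];\er^2)$ together with boundedness of $\{\phi_k\}$ in $W^{1,1}$ coming from the weak convergence.

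By the Arzelà--Ascoli theorem, every subsequence of $\{\phi_k\}$ admits a further subsequence converging uniformly to some continuous $\tilde\phi$. On the other hand, weak convergence in $W^{1,1}$ together with the compact embedding into $L^1$ forces $\phi_k\to\phi$ strongly in $L^1$, hence pointwise a.e.\ along a further subsequence. The uniform limit $\tilde\phi$ must therefore agree almost everywhere, and so by continuity everywhere on $(0,1)$, with the continuous representative of $\phi$. Since every subsequence of $\{\phi_k\}$ has a further subsequence converging uniformly to this same $\phi$, a standard topological argument concludes that the full sequence converges uniformly to $\phi$. The only step requiring care is the passage from weak $L^1$-convergence of $\phi_k'$ to equi-integrability via Dunford--Pettis; everything else is routine one-dimensional Sobolev bookkeeping.
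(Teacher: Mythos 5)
Your proof is correct and follows essentially the same route as the paper: Dunford--Pettis to get equi-integrability of $\{\phi_k'\}$, the fundamental theorem of calculus to convert that into equicontinuity, uniform boundedness from the $W^{1,1}$ bound, and Arzel\`a--Ascoli. If anything, you are a bit more careful than the paper in spelling out the identification of the uniform limit with $\phi$ via strong $L^1$/pointwise a.e.\ convergence and in invoking the subsequence principle to upgrade from a subsequence to the full sequence.
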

\begin{proof}
The proof relies on the representation  
\begin{equation}
\label{1DREp}
\phi(x_1) = \phi(x_2) + \int_{x_1}^{x_2} \phi'(x) \mathrm{d} x    
\end{equation}
which holds for functions in $W^{1,1}((0,1);\er^2)$  for a.a. $x_1, x_2 \in (0,1)$. Now, as $\phi_k\rightharpoonup\phi$, we have in particular that the sequence is uniformly bounded in $W^{1,1}((0,1);\er^2)$. Further, we may assume (passing to a subsequence if necessary) that $\phi_k\to\phi$ strongly in $L^1((0,1);\er^2)$ and pointwise {\color{black}a.e.;} so we can pick $x_0\in (0,1)$ with $|\phi_k(x_0) - \phi(x_0)|\xrightarrow{k\to\infty}0$ and $\phi(x_0) \leq R$ for some $R > 0$ fixed. 

Setting $x_2=x_0$ in \eqref{1DREp} shows that the sequence $\{\phi_k\}_{k \in \mathbb{N}}$ is uniformly bounded. 
Further, as the sequence $\{\phi_k'\}_{k \in \mathbb{N}}$ is weakly convergent in $L^1((0,1);\er^2)$ it is also equiintegrable by the Dunford-Pettis theorem (see e.g. \cite[Theorem A.12]{Ri}) which gives the uniform integrability of these functions, i.e. for every $\epsilon>0$ there is a $\delta>0$ such that 
$$
\text{ for every }A\text{ with }\mathcal{L}^1(A)< \delta\text{ we have }\sup_{k \in \mathbb{N}} \int_{A}|\phi'_k(x)| \mathrm{d} x <\epsilon .
$$ 
Especially this implies the equicontinuity of $\{\phi_k\}_{k \in \mathbb{N}}$. Thus, the convergence is uniform due the Arzela-Ascoli theorem.

\end{proof}

 We now introduce the concept of a  good arrival grid that was actually introduced in \cite{DPP}. The following is an adaption to our context:

\begin{definition}[Good arrival grid]\label{good arrival grid}
	Let $\phi\in W^{1,1}((0,1);\er^2)$  be non-constant and let ${\color{black}L}>0$ be such that $\phi((0,1))$ is compactly contained in $Q(0,{\color{black}L})$ (the cube of side-length $2{\color{black}L}$ centered in $0$). Let $\delta>0$ be arbitrary and let the numbers 
	$$
	-{\color{black}L}=w_0<w_1< \cdots <w_N< w_{N+1}={\color{black}L}\text{ satisfy }w_{n+1}-w_n<\delta
	$$ 
	for every $0\leq n\leq N$. We say that
	\begin{equation}\label{defgag}
		\G = \bigcup_{n=0}^{N+1} \{w_n\}\times [-{\color{black}L},{\color{black}L}]\ \cup\ \bigcup_{n=0}^{N+1} [-{\color{black}L},{\color{black}L}] \times \{w_n\}\subset \overline{Q(0,{\color{black}L})}
	\end{equation}
	is a \emph{$\delta$-fine good arrival grid for $\phi$} if the set $F:=\{x\in(0,1):\phi(x)\in\G\}$ is finite and for every $p\in F$ it holds that 
	\begin{itemize}
		\item[$a)$] $\phi(p) \in \G\setminus \{(w_n, w_m); 0\leq n,m\leq N+1\}$, i.e. $\phi(p)$ is not a cross point of the grid $\G$
		\item[$b)$] there exists $\phi'(p)$,
		\item[$c)$] $|\langle\phi'(p), \vec{n}\rangle|>0$ where $\vec{n}$ is a unit vector perpendicular to the side of $\G$ containing $\phi(p)$.
	\end{itemize}
\end{definition}

\begin{figure}[h t p]
\vskip 190pt
{\begin{picture}(0.0,0.0) 
     \put(-150.2,0.2){\includegraphics[width=1.00\textwidth]{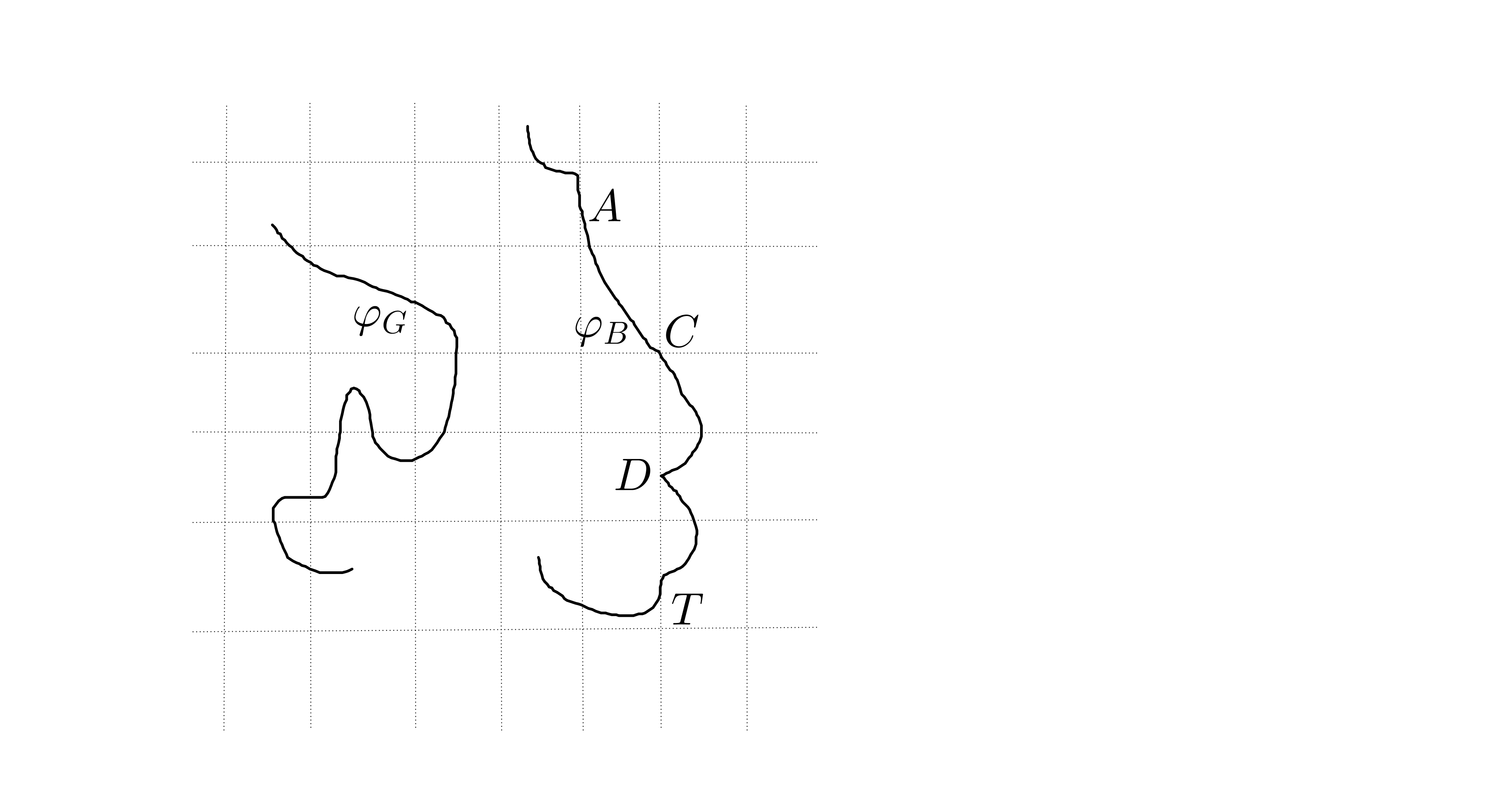}}
  \end{picture}
\vskip -20pt	
\caption{The grid above is a good arrival grid for $\varphi_G$. It is not a good grid for $\varphi_B$ for several reasons: around $A$ it intersects the grid in infinitely many points, it crosses a point $C$ which is the cross point of the grid, it does not have a derivative at $D$ and its derivative at $T$ is parallel to the grid. }\label{goodgrid}
}
\end{figure}

The existence of many such arrival grids was proven in~\cite[Lemma~3.6]{DPP} and the following is a variant of that claim adapted to our context.

\begin{lemma}\label{ArrivalGrid}
	Let $\phi\in W^{1,1}((0,1);\er^2)$  be non-constant  and let ${\color{black}L}>0$ be such that $\overline{\phi((0,1))}$ is compactly contained in $Q(0,{\color{black}L})$. For every $\delta >0$ there exists a $\delta$-fine good arrival grid $\G$ for $\phi$.
\end{lemma}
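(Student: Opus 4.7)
My plan is to build the slice values $w_1,\dots,w_N$ inductively, at each step choosing from a full-measure set of ``good'' candidate values while excluding the finitely many values that would create a forbidden corner of the grid.

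First, I would identify the admissible candidates using the one-dimensional coarea (Banach indicatrix) formula. Writing $\phi=(\phi_1,\phi_2)$ and noting that $\phi_i\in W^{1,1}((0,1))$, the identity
\[
\int_{-L}^{L}\#\,\phi_i^{-1}(w)\,\mathrm{d}w \;=\; \int_0^1|\phi_i'(x)|\,\mathrm{d}x \;<\;\infty
\]
forces $\phi_i^{-1}(w)$ to be finite for $\mathcal{L}^1$-a.e.\ $w\in[-L,L]$. Combined with Lebesgue's differentiation theorem applied to $\phi'$, this lets me define a full-measure set $E_i\subset[-L,L]$ consisting of those $w$ for which $\phi_i^{-1}(w)$ is finite, $\phi$ is differentiable at every preimage point, and $\phi_i'(p)\ne 0$ at each such $p$. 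I then set $E:=E_1\cap E_2$, still of full measure in $[-L,L]$.

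Second, I would perform the inductive construction. Fix $w_0:=-L$ and $w_{N+1}:=L$; since $\overline{\phi((0,1))}$ is compactly contained in $Q(0,L)$, these contribute no preimage, so all three required conditions hold trivially at them. Having chosen $w_0,\dots,w_j\in E$, I look for $w_{j+1}\in E\cap(w_j,w_j+\delta)$ subject to the additional cross-point requirement that no $p\in F$ maps onto $(w_i,w_{j+1})$ or $(w_{j+1},w_i)$ for some $i\le j$. Because each $\phi_k^{-1}(w_i)$ is finite (since $w_i\in E$), the set of values of $w_{j+1}$ that must be excluded is contained in the finite union
\[
\bigcup_{i\le j}\Big[\,\phi_1\big(\phi_2^{-1}(w_i)\big)\,\cup\,\phi_2\big(\phi_1^{-1}(w_i)\big)\,\Big],
\]
leaving a positive-measure subset of $E\cap(w_j,w_j+\delta)$ available. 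An admissible $w_{j+1}$ therefore always exists, and the procedure halts once $L-w_N<\delta$.

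By construction, $F=\bigcup_n\big[\phi_1^{-1}(w_n)\cup\phi_2^{-1}(w_n)\big]$ is a finite union of finite sets; properties (b) and (c) follow at once from $w_n\in E$, while property (a) is built into the inductive selection step. The main point of care is the one-dimensional coarea input producing the full-measure sets $E_i$: without the finiteness of level sets it guarantees, the bookkeeping of forbidden values at each inductive step would break down. Once that measure-theoretic ingredient is in place, the rest reduces to a pigeonhole argument inside a positive-measure interval, in the spirit of the planar construction of~\cite{DPP}.
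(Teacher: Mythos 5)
Your measure-theoretic core is the same as the paper's: the one-dimensional area (Banach indicatrix) formula applied to the projections $\pi_1\circ\phi$ and $\pi_2\circ\phi$, together with the observation that the image of the set where $\phi'$ fails to exist or where a projected derivative vanishes has zero $\mathcal{L}^1$-measure. The organizational difference is that the paper collects all bad target values into a single open set $U$ with $\mathcal{L}^1(U)<\delta/2$, splits $[-L,L]$ into $N$ intervals $I_n$ of length $\delta/2$, and picks one $w_n\in I_n\setminus U$ from each interval in one shot; you instead select the levels inductively, at each step excluding the finitely many values that would create a cross-point with the levels already chosen. Your inductive scheme is a genuine refinement: the paper's one-shot selection does not visibly exclude cross-point values, so property $a)$ of Definition~\ref{good arrival grid} is not addressed in its proof, whereas your induction handles the off-diagonal cross-points $(w_i,w_{j+1})$ and $(w_{j+1},w_i)$ explicitly.

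Two points should be tightened. First, invoking Lebesgue's differentiation theorem alone does not give that $E_i$ has full measure; the step actually needed is a second application of the area formula~\eqref{area}: on (a Borel envelope of) the set $N_i$ of points $t$ where $\phi'(t)$ does not exist or $(\pi_i\circ\phi)'(t)=0$, one has $(\pi_i\circ\phi)'=0$ a.e., so $\mathcal{L}^1\bigl(\pi_i\circ\phi(N_i)\bigr)\le\int_{N_i}\lvert(\pi_i\circ\phi)'\rvert=0$. Second, and more substantively, your exclusion set $\bigcup_{i\le j}\bigl[\phi_1\bigl(\phi_2^{-1}(w_i)\bigr)\cup\phi_2\bigl(\phi_1^{-1}(w_i)\bigr)\bigr]$ with $i\le j$ does not rule out the diagonal cross-point $(w_{j+1},w_{j+1})$: that would require avoiding $\phi_1\bigl(\{\phi_1=\phi_2\}\bigr)$, and this set can have positive measure (take $\phi(t)=(t,t)$), so it cannot be dismissed by the same null-set argument. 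The paper's proof as written shares this gap. The natural repair is to allow two independent families of levels, say $\{u_n\}$ for the vertical and $\{v_m\}$ for the horizontal lines, and choose $\{v_m\}$ after $\{u_n\}$ so that the finitely many cross-point values $\phi_2\bigl(\phi_1^{-1}(u_n)\bigr)$ are avoided; when a single family is used for both directions, as in Definition~\ref{good arrival grid}, the diagonal cross-point cannot always be avoided.
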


In the proof we will use the classical area formula which we recall here for convenience. Let $h:(0,1)\to\er$ be absolutely continuous and $A\subset (0,1)$ be a Borel set, then (see e.g. \cite[Theorem 3.65 for $\psi=\chi_A$]{Le})
\eqn{area}
$$
\int_A|h'(t)| \mathrm{d} t = \int_{\er} \H^0(\{t\in A:\ h(t) = z\}) \, \mathrm{d} z. 
$$
\begin{proof}[Proof of Lemma \ref{ArrivalGrid}]
	Call $\pi_1(x,y)  := x, \pi_2(x,y)  := y$ the projections to the axes. Let us recall that each $\phi\in W^{1,1}((0,1);\er^2)$ is absolutely continuous and thus $\pi_i\circ \phi$ are absolutely continuous so that the area formula applies to them and their derivatives exist a.e. Denote
	$$
		N_i = \{t\in (0,1); \text{ derivative }\phi'(t)\text{ does not exist}\} \cup \{t\in (0,1); (\pi_i\circ\phi)'(t) = 0\}.
	$$
	We can clearly fix a Borel set $\tilde{N_i}\supset N_i$ of the same measure. For a.e. $t\in \tilde{N_i}$ we  have $(\pi_i\circ\phi)'(t) = 0$ and hence
 we can use the area formula \eqref{area} to obtain 
	$$
	\mathcal{L}^1(\pi_i\circ\phi(N_i))\leq\mathcal{L}^1(\pi_i\circ\phi(\tilde{N}_i))\leq \int_{\er} \H^0(\{t\in \tilde{N}_i:\ \pi_i(\phi(t)) = z\}) \, \mathrm{d}z =\int_{\tilde{N}_i}(\pi_i\circ \phi)'(t)\; \mathrm{d} t=0.
	$$ 
	Further, since $\phi\in W^{1,1}((0,1);\er^2)$, the area formula applied to $\pi_i\circ\phi$ again gives
	$$
	\infty > \int_0^1|\phi'(s)|{\color{black} \, \mathrm{d} s} \geq \int_0^1|\pi_i\circ\phi'(s)|{\color{black} \, \mathrm{d} s} = \int_{-{\color{black}L}}^{\color{black}L} \H^0(\{t:\ \pi_i(\phi(t)) = z\})  {\color{black} \, \mathrm{d} z} 
	$$
	and so
	$$
		\H^0\Bigl(\phi((0,1))\cap \bigl(\{x\}\times \er\bigr)\Bigr) < \infty \ \text{and} \ \H^0\Bigl(\phi((0,1))\cap \bigl(\er\times\{y\}\bigr)\Bigr) < \infty
	$$
	for almost every $x,y\in \er$. We find an open set
	$$
		\begin{aligned}
			U \supset \, \big(\pi_1\circ\phi(N_1 ) \big) \, &\cup \, \big(\pi_2\circ\phi(N_2) \big) \, \cup \{x\in \mathbb{R}; \H^0(\phi((0,1))\cap (\{x\}\times \er)) = \infty\}\\
			& \cup \{y \in \er;   \H^0(\phi((0,1))\cap (\er\times\{y\})) = \infty\}
		\end{aligned}
	$$
	such that $\mathcal{L}^1(U)< \delta / 2$. Then every interval of length $\delta/2$ contains a point of $\er\setminus U$. We separate $[-{\color{black}L},{\color{black}L}] $ into $N$ intervals $I_n$ of length $\delta/2$ (there is no barrier to assume that ${\color{black}L}\delta^{-1}\in \en$). In each interval $I_n$ we choose a point $w_n$ from $I_n\setminus U$ arbitrarily. The distance of $w_n$ from his neighbors is at most $\delta$. By the choice of $U$ we satisfy the definition of a good arrival grid for $\phi$.
\end{proof}

We define the concept of the generalized segment, already introduced in \cite{DPP}, which will be useful throughout the proof of Theorem \ref{NotParticullarlyDeep}.

\begin{definition}[generalized segments]\label{generalized segments}
	Let $\mathcal{G} \subset Q(0,1)\subset \er^2$ be a grid (the finite union of horizontal and vertical lines). Let $K$ be a rectangle of the grid $\G$ (the closure of a component of $Q(0,1) \setminus \G$). Let $X\neq Y$ and $X,Y\in\partial K \subset \G$. 
 
 Given $\xi >0$ a small parameter, the \emph{generalized segment $[XY]$ with parameter $\xi$}  between $X$ and $Y$ in $K$ is defined as the standard segment $XY$ if the two points are not on the same side of $\partial K$; otherwise, $[XY]$ is the union of two segments of the form $XM$ and $MY$ where $M$ is the point inside $K$ whose distance from the side containing $X$ and $Y$ is $\xi|X-Y|/2$ and the projection of $M$ on the segment $XY$ is the mid-point of $XY$.
\end{definition}

	The following claim from \cite{CKR} is easy.

\begin{prop}\label{EverythingIDo} 
	Let $K\subset \er^2$ be a rectangle and let $a,b \in \partial K$. Let $S$ be a generalized segment from $a$ to $b$ in $K$ with parameter $\xi >0$ and let $\tilde{S} \subset S$ be a closed and connected subset of $S$. Then
	$$
	\H^1(\tilde S) \leq (1+\xi)\diam(\tilde{S}),
	$$
 {\color{black} where $\diam(\tilde{S}) = \sup\{|x-y|; x,y\in \tilde{S}\}$ is taken with respect to the Euclidean norm in $\mathbb{R}^2$}.
\end{prop}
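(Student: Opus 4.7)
\textbf{Proof plan for Proposition~\ref{EverythingIDo}.}
I would split the argument according to the two possible shapes of the generalized segment $S$. If $a$ and $b$ lie on different sides of $\partial K$, then by Definition~\ref{generalized segments} $S$ is a standard segment; since any closed connected subset of a line segment is itself a (possibly degenerate) segment, we get $\H^1(\tilde S)=\diam(\tilde S)\leq (1+\xi)\diam(\tilde S)$, which settles this case. From here on, I would assume $a$ and $b$ lie on the same side of $\partial K$, so that $S$ is the union of two segments $aM$ and $Mb$ meeting at the corner point $M$ (with $M$ at distance $\xi|a-b|/2$ from the common side, projecting to the midpoint of $ab$).

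Next I would analyze the shape of $\tilde S$ using connectedness. Since $(aM\setminus\{M\})$ and $(Mb\setminus\{M\})$ are disjoint relatively open subsets of $S\setminus\{M\}$, if $M\notin\tilde S$ then $\tilde S$ must be entirely contained in one of them, reducing to the straight-segment case already handled. Otherwise $M\in\tilde S$, and then $\tilde S\cap aM$ (respectively $\tilde S\cap Mb$) is a closed connected subset of a segment containing $M$, hence a subsegment of the form $[X'M]$ (respectively $[MY']$). Thus $\tilde S=[X'M]\cup[MY']$ and $\H^1(\tilde S)=u+v$ where $u:=|X'-M|$ and $v:=|M-Y'|$.

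The core computation is then a plane-geometry estimate. Placing coordinates so that $a=(-d,0)$, $b=(d,0)$ and $M=(0,\xi d)$, the angle $\theta$ at $M$ between the two segments satisfies $\cos\theta=(\xi^2-1)/(\xi^2+1)$. By the law of cosines,
\[
|X'-Y'|^2=u^2+v^2-2uv\cos\theta=(u+v)^2-\frac{4uv\,\xi^2}{1+\xi^2}.
\]
Since $X',Y'\in\tilde S$, we have $\diam(\tilde S)^2\geq |X'-Y'|^2$, and combining with the AM-GM bound $4uv\leq (u+v)^2$ gives
\[
(u+v)^2\leq \diam(\tilde S)^2+\frac{\xi^2}{1+\xi^2}(u+v)^2,
\]
which rearranges to $(u+v)^2\leq (1+\xi^2)\diam(\tilde S)^2\leq (1+\xi)^2\diam(\tilde S)^2$. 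Taking square roots yields $\H^1(\tilde S)=u+v\leq (1+\xi)\diam(\tilde S)$, as claimed.

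The only mildly delicate point is recognizing that connectedness forces $\tilde S$ to contain $M$ whenever it meets both arms of the generalized segment; everything else is a direct trigonometric bound, and in fact the argument yields the slightly sharper constant $\sqrt{1+\xi^2}$, which is absorbed into the stated $(1+\xi)$.
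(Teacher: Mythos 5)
Your proof is correct, and in fact the paper does not give a proof of Proposition~\ref{EverythingIDo} at all: it is stated with the remark that it is ``easy'' and attributed to \cite{CKR}, so there is no internal argument to compare against. Your reduction is exactly the natural one and is carried out cleanly.

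A few remarks on the substance. The case split on whether $a$ and $b$ share a side of $\partial K$ is the right organizing step, and the topological reduction (either $\tilde S$ misses the vertex $M$ and lives on one arm, or it contains $M$ and then intersects each arm in a closed subsegment ending at $M$) is the only place where connectedness is genuinely used; your two-sentence justification is adequate, the point being that removing an interior point $r$ of one arm disconnects $S$ into a piece containing $a$ and a piece containing $M$ together with the whole other arm. The law-of-cosines computation checks out: with $M=(0,\xi d)$ one indeed gets $\cos\theta=(\xi^2-1)/(\xi^2+1)$, the identity $(u+v)^2-|X'-Y'|^2 = 2uv(1+\cos\theta)=4uv\,\xi^2/(1+\xi^2)$ is exact, and AM--GM plus $|X'-Y'|\le\diam(\tilde S)$ gives $\H^1(\tilde S)=u+v\le\sqrt{1+\xi^2}\,\diam(\tilde S)$. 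As you note, this sharpens the stated constant $1+\xi$ (since $\sqrt{1+\xi^2}\le 1+\xi$ for $\xi\ge 0$), and the inequality is tight for the sharper constant at $u=v$. One detail worth making explicit: you only use $|X'-Y'|\le\diam(\tilde S)$, never equality, which is important because for $\xi>1$ the angle at $M$ is acute and $|X'-Y'|$ need not realize the diameter of $\tilde S$. Your argument already handles this correctly.
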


We are now at the position to prove Theorem \ref{NotParticullarlyDeep}:

\begin{proof}[Proof of Theorem \ref{NotParticullarlyDeep}]
Condition $(1)$ implies condition $(2)$ immediately. By Lemma~\ref{uniform}, condition $(2)$ implies condition $(3)$. Therefore we only need to prove that $(3)$ implies $(1)$. We proceed in several steps and prove at the same time that the approximating sequence can be chosen as piece-wise affine.

{\underline{Step 1 - Construction of an injective approximation $\hat{\varphi}_{\delta}$ using the grid $\G_{\delta}$:}} \\
Let $\phi \in W^{1,p}((0,1), \er^2)$ for some $p\in[1, \infty)$. By condition $(3)$ we can find equicontinuous $\tilde{\phi}_{ \delta }$ so that $\tilde{\phi}_{ \delta }\sto \phi$ as $ \delta \to 0 $. We first construct some preliminary  injective continuous maps $\hat{\phi}_{\delta}$  using  $\tilde{\phi}_{\delta}$ to essentially create an injective approximation in the grid $\G_{\delta}$. Later we will use these functions to define  $\phi_{\delta}$ for which we will have $\phi_{\delta} \to \phi$ in $W^{1,p}((0,1), \er^2)$ as $\delta\to 0^+$.

Let $0<\delta<\frac{1}{2}$ be fixed. Let $\G_{\delta}$ be a $\delta$-fine arrival grid for $\phi$ and let the numbers $w_n$ be defined as in \eqref{defgag}. By {\color{black} the }definition of {\color{black} a } good arrival grid, we know that the set 
$$
F_{\delta}:=\{t\in(0,1): \phi(t)\in \G_{\delta}\}
$$ 
is finite. 
 We now start to show that {\color{black} we can find sufficiently small neighborhoods} {\color{black}$(t-\eta, t+\eta)$} and $B(\varphi(t), \varepsilon)$ for $t \in F_\delta$ so that the intervals {\color{black}$(t-\eta, t+\eta)$} are pair-wise disjoint and the balls $ B(\varphi(t), \varepsilon)$ are pair-wise disjoint and do not intersect the corners $(w_n,w_m)$. Moreover, we want to take $\varepsilon$ so small that, roughly speaking, an affine interpolation connecting any point in $B(t_1,\eta)$ and $B(t_2,\eta)$ will be a good approximation (in the  $W^{1,p}((0,1), \er^2)$-norm) for the original functions $\varphi$. Of course, having then the freedom to select the points to construct the approximation, we will do that carefully to respect injectivity.  

At each point $t\in F_{\delta}$ it holds that $|\phi'(t)|>0$ and $\phi'(t)$ is not parallel to the side of $\G_{\delta}$ containing $\phi(t)$. Therefore we can find a $v>0$ {\color{black} and} $\eta  =\eta(\delta) >0$ and for every $t\in F_{\delta}$ a unit vector $\vec{n}_t$ perpendicular to $\G_{\delta}$ at $\phi(t)$ such that
	\begin{equation}\label{cross}
		\langle \phi(t+h)-\phi(t), \vec{n}_t\rangle \geq hv
	\end{equation}
for all $h\in [-\eta, \eta]$. 
By the differentiability of $\phi$ at $t$ we can also assume that 
$$
\bigl|\phi(t+h)-\phi(t)-\phi'(t)h\bigr|\leq |h| v
$$
for every $h\in [-\eta, \eta]$. 
Moreover,  by choosing $\eta$ smaller if necessary,  we may assume that the intervals $[t-\eta, t+\eta]$ are pairwise disjoint for $t\in F_{\delta}$. 
Call
$$
	\begin{aligned}
		\epsilon_1 & = \epsilon_1(\delta) := \min\{\dist((w_n, w_m), \phi(t));\,  t\in F_{\delta},  1\leq n,m\leq N\},\\
		\epsilon_2 & = \epsilon_2(\delta) := \min\big\{\varepsilon_1, \min\big\{|\phi(t)- \phi(s)|;\, s,t \in F_{\delta}, \phi(t)\neq \phi(s) \big\} \big\},\\
	\end{aligned}
$$
 where, by definition, $\varepsilon_2 \leq \varepsilon_1$.
Then the balls $B(z,\epsilon_2)$, {\color{black}$z\in \phi(F_{\delta})$,} are pairwise disjoint and do not intersect corners of the grid.  By further shrinking the balls at the intersection of $\phi((0,1))$ with $\G_{\delta}$, if necessary, we can take 
$$
\epsilon_3 <\min \{\varepsilon_2, \eta v\}, 
$$
 so it holds that
$$
	\phi(s) \in B(z, \epsilon_3) \text{ for some } z\in \phi(F_{\delta})  \Leftrightarrow s\in [t-\eta, t+\eta] \text{ for some } t\in F_{\delta} \text{ and } \phi(t) = z.
$$
 This means that inside the balls $B(z, \epsilon_3)$, we can only see the images of the intervals $[t-\eta, t+\eta]$, with $t \in F_\delta$.

At last, we may still shrink the neighborhoods $B(z, \varepsilon_3)$ so that we obtain a good approximation also on the discrete derivatives:  Thanks to the fact that  $F_{\delta}$ is finite, we can fix a number  $\nu>0$ such that for every $t_1,t_2\in F_{\delta}$ with $\phi(t_1)\neq\phi(t_2)$ we have
\eqn{defnu}
$$
|\phi(t_1)-\phi(t_2)|+4\nu<(1+\delta)|\phi(t_1)-\phi(t_2)|. 
$$
We can also assume that our $\nu$ is so small that for every distinct $t_1,t_2\in F_{\delta}$ we have 
\eqn{defnu2}
$$
(t_2-t_1)^{1-p}\nu^p\# F_{\delta}\leq \delta, 
$$
where $\# F_{\delta}$ is the number of elements in $F_{\delta}$.  Choosing now $\varepsilon < \min\{\varepsilon_3, \nu, \frac{\delta}{\# F_{\delta}}\}$ (here the last term will be needed in the $W^{1,1}$-case) yields  balls sufficiently small for our needs. 

 Now, we want to find a {\color{black}suitable continuous injective approximant $\tilde{\varphi}_{\delta}$,} which we will later use to construct the strong injective approximation. What we will need is to take this first approximative function near enough to $\varphi$ so that we can assure that it crosses the grid \emph{only} inside the constructed balls $B(z,\varepsilon)$,{\color{black} $z\in \phi(F_{\delta})$}. To ensure that, we proceed as follows:  
For each $t\in F_{\delta}$ we define $t^-, t^+ \in [t-\eta, t+\eta]$ as the smallest and largest argument respectively such that $\phi(t^{\pm})\in \partial B(\phi(t), \epsilon)$ (see Figure \ref{defphi}). 
Notice by \eqref{cross} that $\phi(t^-)$ and $\phi(t^+)$ lie on different components of $\partial B(\phi(t), \epsilon)\setminus \G_{\delta}$.

\begin{figure}[h t p]
\vskip 200pt
{\begin{picture}(0.0,0.0) 
     \put(-150.2,0.2){\includegraphics[width=0.70\textwidth]{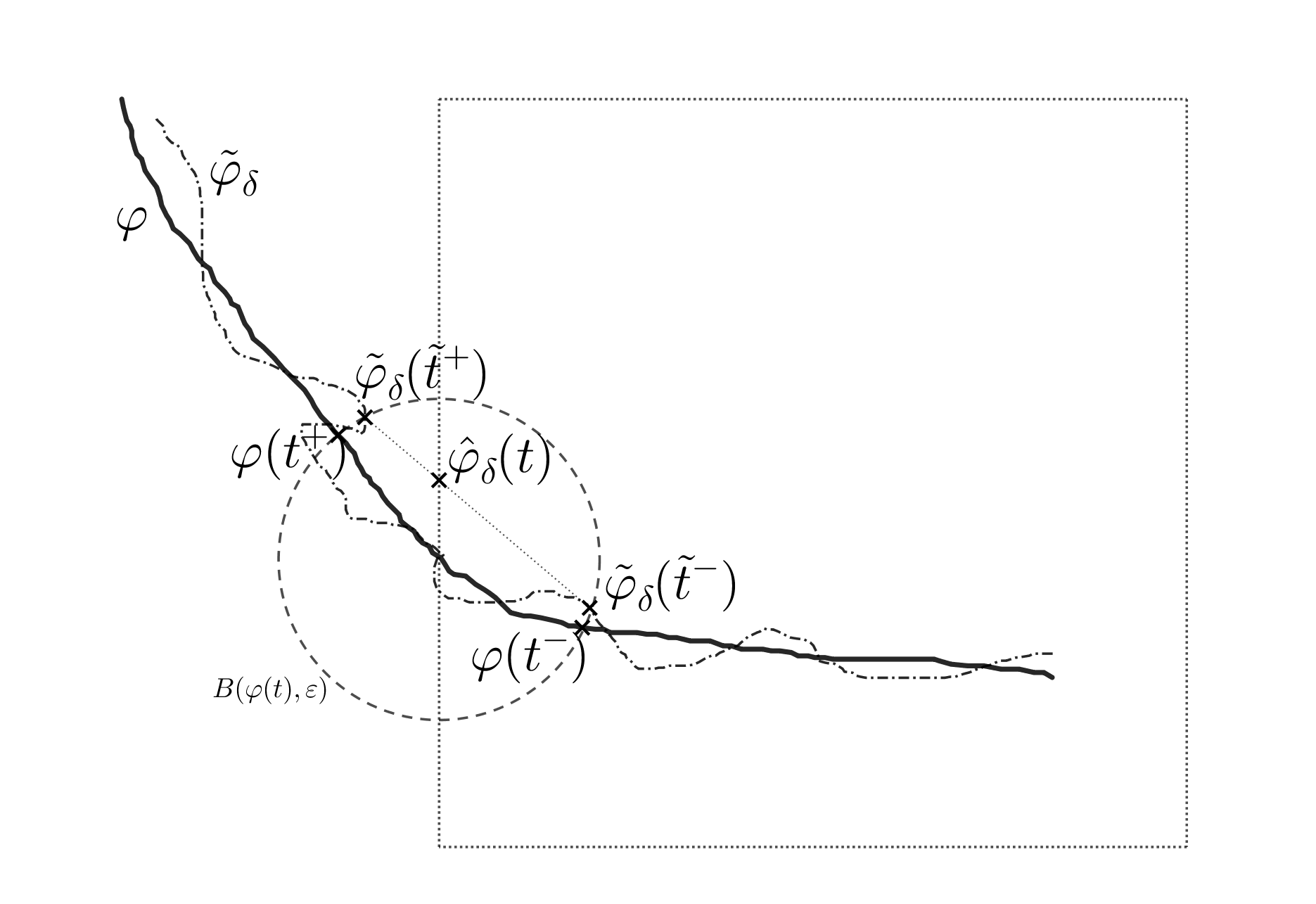}}
  \end{picture}
\vskip -10pt	
\caption{An illustration of the definition of the points $\tilde{\phi}_{\delta}(\tilde{t}^{\pm})$ and $\hat{\phi}(t)$ in $B(\varphi(t),\epsilon)$. The curve $\phi$ is drawn by a full line while the curve and $\tilde{\varphi}_{\delta}$ are dash-dotted. The segment $\tilde{\phi}_{\delta}(t^-)\tilde{\phi}_{\delta}(t^+)$ is also indicated in the figure.}\label{defphi}
}
\end{figure}

Now we choose 
$$
\tepsilon:=\frac{1}{2}\dist\Bigl(\phi\bigl((0,1)\setminus\bigcup_{t\in F_{\delta}}(t^-,t^+)\bigr),\G_{\delta}\Bigr).
$$
By condition $(3)$, we may find a map   $\tilde{\phi}_{\delta}:(0,1)\to \er^2$ that is injective, continuous and $\| \tilde{\phi}_{\delta}\EEE - \phi\|_{ C^0((0,1); \mathbb{R}^2)}< \tepsilon$, i.e. it crosses the grid exactly in the balls $B(z,\varepsilon)$,{\color{black} $z\in \phi(F_{\delta})$}.

We now use that function $\tilde{\phi}_{\delta}$ to ``tear apart'' the non-injectivities on the grid: \EEE For each $t\in F_{\delta}$ we define $\tilde{t}^-, \tilde{t}^+ \in [t-\eta, t+\eta]$ as the smallest and largest argument respectively such that $\tilde{\phi}_{\delta}(\tilde{t}^{\pm})\in \partial B(\phi(t), \epsilon)$ (see Figure \ref{defphi}). By 
$$
\| \tilde{\phi}_{\delta} - \phi\|_{ C^0((0,1); \mathbb{R}^2)}< \tepsilon\text{ we obtain }| \tilde{\phi}_{\delta}(\tilde{t}^{\pm})-\phi(t^{\pm})|<\tepsilon.
$$ 
We know that $\phi(t^-)$ and $\phi(t^+)$ lie on different components of $\partial B(\phi(t), \epsilon)\setminus \G_{\delta}$ and hence by the definition of $\tepsilon$ we obtain 
that also $ \tilde{\phi}_{\delta}(\tilde{t}^-)$ and $ \tilde{\phi}_{\delta}\EEE(\tilde{t}^+)$ lie on different components of $\partial B(\phi(t), \epsilon)\setminus \G_{\delta}$. 
Therefore the segment $ \tilde{\phi}_{\delta}(t^-) \tilde{\phi}_{\delta} (t^+)$ {\color{black}starts on one side of $\G_{\delta}\cap B(\phi(t), \epsilon)$ and ends on the other side of $\G_{\delta}\cap B(\phi(t), \epsilon)$ inside $B(\phi(t), \epsilon)$. Thus $ \tilde{\phi}_{\delta}(t^-) \tilde{\phi}_{\delta} (t^+)$} intersects $\G_{\delta}$ exactly once, call this point $\hat{\phi}_{\delta}(t)$. 

Note that we obviously have
\eqn{phiclose}
$$
|\phi(t)-\hat{\phi}_{\delta}(t)|<2\epsilon<2\nu. 
$$

Now we use $ \tilde{\phi}_{\delta}$ to define the intermediary injective continuous $\hat{\phi}_{\delta}:(0,1)\to \er^2$. 
On $[\tilde{t}^-, \tilde{t}^+]$ we define $\hat{\phi}_{\delta}$ {\color{black} by gluing the linear map that sends $[\tilde{t}^-,t]$ onto the segment $ \tilde{\phi}_{\delta}(\tilde{t}^-)\hat{\phi}_{\delta}(t)$ to the map that sends $[t,\tilde{t}^+]$ onto the segment $\hat{\phi}_{\delta}(t)\tilde{\phi}_{\delta}(\tilde{t}^+)$}. We do this for all $t\in F_{\delta}$. On the set $(0,1)\setminus \bigcup_{t\in F_{\delta}}[\tilde{t}^-,\tilde{t}^+]$ we put $\hat{\phi}_{\delta} =  \tilde{\phi}_{\delta}$. 

\begin{figure}[h t p]
\vskip 160pt
{\begin{picture}(0.0,0.0) 
     \put(-150.2,0.2){\includegraphics[width=0.80\textwidth]{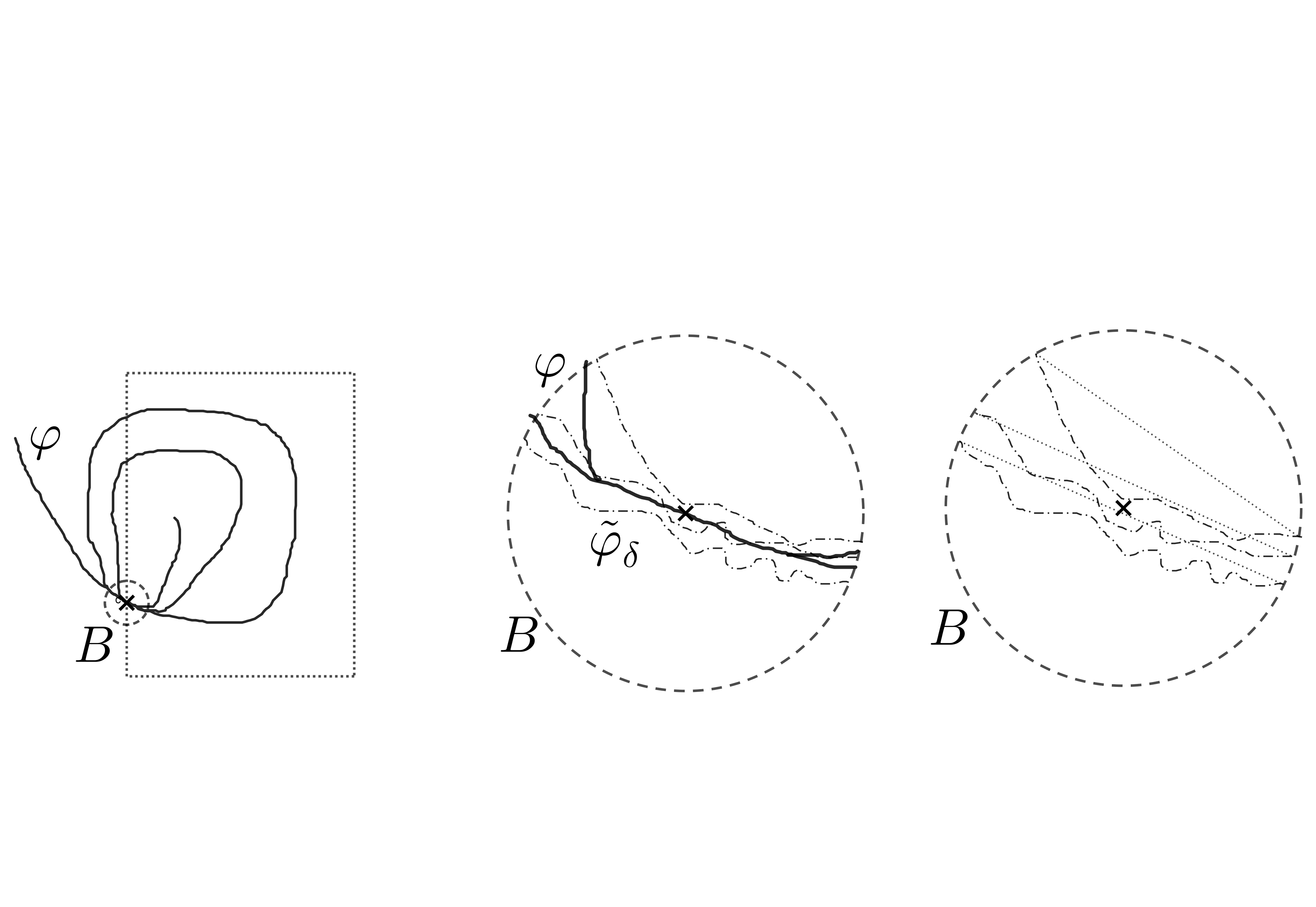}}
  \end{picture}
\vskip -40pt	
\caption{Definition of $\hat{\phi}_{\delta}$ inside of ball $B:=B(\varphi(t),\epsilon)$ such that $\varphi$ passes through $\varphi(t)$ three times. 
the curve $\phi$ is denoted by a full line while  $ \tilde{\phi}_{\delta}$ is dash-dotted. On the left, we can see how $\varphi$ passes through $\varphi(t)$ three times, and in the middle picture, we can see in detail how three injective pieces of $ \tilde{\phi}_{\delta}$ approximate $\phi$ inside $B$. On the right picture, we can see that three pieces of $ \tilde{\phi}_{\delta}$ are injective inside $B$ and hence their replacement by $\hat{\varphi}_{\delta}$ (see the three-dotted segments there) are also injective there and do not intersect.}\label{defhat}
}
\end{figure}

Clearly $\hat{\phi}_{\delta}$ is continuous. Thanks to the injectivity of $\ \tilde{\phi}_{\delta}$ it is not hard to observe that $\hat{\phi}_{\delta}$ is injective. 
Indeed, we know that $B(z,\epsilon)$, $z\in F_{\delta}$, are pairwise disjoint and outside of these balls our $\hat{\phi}_{\delta}$ is injective since it agrees with $ \tilde{\phi}_{\delta}$.  Moreover, by the choice of the numbers $\tilde{t}_i^-, \tilde{t}_i^+$, we know that $\tilde{\varphi}_\delta((0,1)\setminus \bigcup_i (\tilde{t}_i^-, \tilde{t}_i^+))$ is disjoint with all the balls $B(z,\varepsilon)$. Thus, it is sufficient to verify the injectivity on one fixed ball $B(z,\epsilon)$, $z\in F_{\delta}$. There,  we have finitely many $t_1,\hdots,t_k\in F_{\delta}$ such that $\phi(t_i)=z$ and each such 
$t_i$ gives us one {\color{black} affine} segment $ \tilde{\phi}_{\delta}(\tilde{t}_i^-) \tilde{\phi}_{\delta}(\tilde{t}_i^+)$ inside $B(z,\epsilon)$ (see Figure \ref{defhat}). Since the curve $ \tilde{\phi}_{\delta}$ was injective, it follows that parts of $\tilde{\phi}_{\delta}$ between points {\color{black}$ \tilde{\phi}_{\delta}(\tilde{t}_i^-)$ and $\tilde{\phi}_{\delta}\EEE(\tilde{t}_i^+)$} do not intersect and hence a simple topological argument valid in dimension two gives that segments $\tilde{\phi}_{\delta}\EEE(\tilde{t}_i^-)\tilde{\phi}_{\delta}(\tilde{t}_i^+)$ also do not intersect (see Figure \ref{defhat}). {\color{black} To wit, each component of $\tilde{\phi}_{\delta}(0,1) \cap B(z,\epsilon)$ seperates the disk $B(z,\epsilon)$ into two domains, which we will call `north' if the closure of that component contains a point $N\in \partial B(z,\epsilon) \setminus \tilde{\phi}_{\delta}(0,1)$ and a `south' otherwise. One of the components of $\tilde{\phi}_{\delta}(0,1) \cap B(z,\epsilon)$ is north of another exactly when its endpoints are north of the endpoints of the other and this is exactly when the corresponding segments are aligned in the same way.} 
Thus,  injectivity of $\hat{\phi}_{\delta}$ follows. Note that $\hat{\phi}_{\delta}^{-1} (\G_{\delta})= F_{\delta}$ so the set $F_{\delta}$ is the same for $\hat{\phi}_{\delta}$ as it was for the original $\phi$. 

{\underline{Step 2 - Construction of injective $ {\varphi}_{\delta} $ and approximation in $W^{1,p}((0,1); \mathbb{R}^2)$ for $p>1$:}} 

Now we define $\phi_{\delta}$  basically as an affine interpolation between the points $\hat{\phi}_{\delta}(t_i), t_i \in F_\delta$, when care is needed if two  $\hat{\phi}_{\delta}(t)$'s lie on the same side. In more detail,  for each $t\in F_{\delta}$ we put $\phi_{\delta}(t) = \hat{\phi}_{\delta}(t)$. For all neighboring pairs $t_1,t_2 \in F_{\delta}$ such that $\phi_{\delta}(t_1)=\hat{\phi}_{\delta}(t_1)$ and $\phi_{\delta}(t_2)=\hat{\phi}_{\delta}(t_2)$ do not lie on the same side of $\G_{\delta}$ we define $\phi_{\delta}$ on $[t_1,t_2]$ as the  affine function which equals $\hat{\phi}_{\delta}$ at $t_1$ and $t_2$. It is not difficult to see that if we have two segment $\phi_{\delta}(t_1)\phi_{\delta}(t_2)$ and 
$\phi_{\delta}(\tilde{t}_1)\phi_{\delta}(\tilde{t}_2)$ of this type that they do not intersect as the original curves $\hat{\phi}_\delta([t_1,t_2])$ and $\hat{\phi}_\delta([\tilde{t}_1,\tilde{t}_2])$ do not intersect as $\hat{\phi}$ is injective. Indeed, either the two segments lie in the same rectangle of the grid, in which case we may rely on the same topological argument as above; or they lie in two different rectangles and thus are disjoint. 
In case our neighboring pair $t_3,t_4 \in F_{\delta}$ lies on the same segment 
in the grid $\G_{\delta}$ we connect ${\phi}_{\delta}(t_3)$ and ${\phi}_{\delta}(t_4)$ by a generalized segment $[{\phi}_{\delta}(t_3){\phi}_{\delta}(t_4)]$ with parameter 
$0<\xi<\frac{\delta}{\# F_{\delta}}$  in such a way that the generalized segment lies in the same rectangle of the grid as the original {\color{black} curve $\hat{\varphi}([t_3,t_4])$.} 

In this way, the verification of whether the introduction of generalized segments preserves injectivity reduces to one rectangle of the grid.
As we have only finitely many points in $F_{\delta}$, this can be assured by choosing  {\color{black}$\xi>0$} sufficiently small so that the newly introduced generalized segments do not intersect each other or the previous segments. 


Having chosen $\xi$ we define $\phi_{\delta}$ on the generalized segments via the constant speed parametrization.  It remains to extend $\phi_{\delta}$ onto the intervals $(0,t_{\text{min}}]$ and $[t_{\text{max}}, 1)$, where $t_{\text{min}}, t_{\text{max}}$ are the largest and smallest $t\in F_{\delta}$ (so the fist and last crossing point with the grid). We can do that by, for example, connecting $\varphi(0)$ and  $\phi_{\delta}(t_{\text{min}})$ as well as $\phi_{\delta}(t_{\text{max}})$ and $\varphi(1)$ by a segment. 
		
 Let us notice that by construction, we have that for any $K = K_{n,m} = [w_n,w_{n+1}]\times[w_m, w_{m+1}]$ (a closed rectangle formed by the grid $\G_{\delta}$) we have that $\phi_{\delta}(s)\in K$ exactly if $\phi(s)\in K$. 

Therefore 
$$
\|\phi_{\delta} - \phi\|_{L^{\infty}((0,1); \mathbb{R}^2)}\leq \diam K < 4\delta 
\text{ and thus also }
\|\phi_{\delta} - \phi\|_{L^{p}((0,1); \mathbb{R}^2)}<4\delta.
$$
Note that this uniform convergence works also for $p=1$. 

Now we prove that $\phi_{\delta}' \to \phi'$ in $L^{p}((0,1); \mathbb{R}^2)$. Let us have a pair of neighbors $t_1,t_2\in F_{\delta}$. 
We know that for $t\in F_{\delta}$ we have $\phi_{\delta}(t)=\hat{\phi}_{\delta}(t)$ and hence \eqref{phiclose} and \eqref{defnu} imply that 
$$
|\phi_{\delta}(t_2)-\phi_{\delta}(t_1)|\leq |\phi(t_2)-\phi(t_1)|+4\nu<(1+\delta) |\phi(t_2)-\phi(t_1)|. 
$$
{\color{black} In the following first equality we use the fact that the parametrization is  of constant speed. Then, using Proposition~\ref{EverythingIDo} we have, for each pair of neighbors $t_1,t_2\in F_{\delta}$ with $\phi(t_1)\neq \phi(t_2)$, that
\eqn{longer}
$$
\begin{aligned}
	\int_{t_1}^{t_2}|\phi_{\delta}'|^p &= \int_{t_1}^{t_2}\Big(\oint_{t_1}^{t_2}|\phi_{\delta}'(s)|ds\Big)^p \mathrm{d}w \\
	&\leq \int_{t_1}^{t_2}\Big(\frac{1}{t_2-t_1}(1+\delta)\big|\phi_{\delta}(t_2)-\phi_{\delta}(t_1)\big|\Big)^p \mathrm{d}w\\
	&\leq \int_{t_1}^{t_2}\Big(\frac{1}{t_2-t_1}(1+\delta)^2\big|\phi(t_2)-\phi(t_1)\big|\Big)^p \mathrm{d}w\\
	&\leq (1+\delta)^{2p}  (t_2-t_1)\Bigl(\oint_{t_1}^{t_2}|\phi'(s)|\mathrm{d}s\Bigr)^p\\
	&\leq (1+\delta)^{2p} \int_{t_1}^{t_2}|\phi'(s)|^p \mathrm{d}s.
\end{aligned}
$$}
In case $\phi(t_1)=\phi(t_2)$ we can estimate similarly and use \eqref{phiclose} to obtain 
\eqn{longer2}
$$
\begin{aligned}
	\int_{t_1}^{t_2}|\phi_{\delta}'|^p 
	&\leq \int_{t_1}^{t_2}\Big(\frac{1}{t_2-t_1}(1+\delta)|\phi_{\delta}(t_2)-\phi_{\delta}(t_1)|\Big)^p \mathrm{d}w\\
	&\leq \int_{t_1}^{t_2}\Big(\frac{1}{t_2-t_1}(1+\delta)4\eta\Big)^p \mathrm{d}w\\
	&\leq C(p)(t_2-t_1)^{1-p} \eta^p.\\
\end{aligned}
$$

Summing over neighbors in $F_{\delta}$ we have using \eqref{defnu2} that
	$$
	\begin{aligned}
		\int_{0}^{1}|\phi_{\delta}'(s)|^p \mathrm{d}s
		\leq & (1+\delta)^{2p} \int_{0}^{1}|\phi'(s)|^p \mathrm{d}s +C\delta.
	\end{aligned}
	$$ 

	For $p\in (1,\infty)$ the fact that $\|\phi_{\delta}'\|_p$ is bounded 
 means that for any infinitesimal sequence $\delta_k \to 0$ we can find a subsequence  such that $\phi_{\delta_{k_j}} \deb \phi$ in $W^{1,p}((0,1); \mathbb{R}^2)$. The inequality above implies that $\|\phi'_{\delta_{k_j}}\|_{L^p((0,1); \mathbb{R}^2)}$ is converging to $\|\phi'\|_{L^p((0,1); \mathbb{R}^2)}$ and hence strict convexity of $L^p$, $1<p<\infty$, implies using Clarkson's inequalities in $L^p$ that we have also strong convergence in $W^{1,p}$. 

{\underline{Step 3 - Approximation in $W^{1,1}((0,1); \mathbb{R}^2)$:}} 

Now we prove that $\phi_{\delta}' \to \phi'$ in $L^{1}((0,1); \mathbb{R}^2)$. We will need to do this differently to above as we cannot use uniform convexity of the space. Choose $\alpha \in (0,\tfrac{1}{4})$ and for the choice of $\alpha$ find $\beta>0$ such that 
$$
\text{ for every } E\subset(0,1)\text{ with }\mathcal{L}^1(E)< \beta\text{ we have }\int_E|\phi'|< \alpha.
$$
Because, for almost every $t\in (0,1)$ it holds that
$$
\lim_{r\to0^+} \oint_{t-r}^{t+r} |\phi'(s)-\phi'(t)|  \mathrm{d}s = 0
$$
we can find small parameters $\lambda, r\in (0,\alpha)$ such that $\mathcal{L}^1((0,1)\setminus H_{\lambda, r})<\beta$ where
$$
	H_{\lambda, r} := \Big\{ t\in(0,1); \ \exists \phi'(t),  \ |\phi'(t)|\in \{0\}\cup(\lambda,\infty), \sup_{\rho\in(0,r)}  \oint_{t-\rho}^{t+\rho} |\phi'(s)-\phi'(t)| \mathrm{d}s \leq \tfrac{1}{8}\lambda \Big\}.
$$
We also denote $H_{\lambda, r}^0 =H_{\lambda, r}\cap\{t;\phi'(t) = 0\}$ and $H_{\lambda, r}^+ = H_{\lambda, r}\setminus H_{\lambda, r}^0$. We show that for any 
\eqn{defdelta}
$$
0<\delta  < \frac{1}{8}\lambda r
$$
we have a good estimate of $\int |\phi'-\phi'_{\delta}|$.

We sum \eqref{longer} for $p=1$ over all intervals $(t_1,t_2)\subset (0,1)\setminus H_{\lambda,r}^+$ and we obtain using $\phi'=0$ on 
$H_{\lambda, r}\setminus H_{\lambda,r}^+$ and $\mathcal{L}^1((0,1)\setminus H_{\lambda, r})<\beta$ that
\eqn{Love}
$$
\begin{aligned}
		\sum_{(t_1,t_2)\subset (0,1)\setminus H_{\lambda,r}^+}&\int_{t_1}^{t_2}|\phi'(s)-\phi_{\delta}'(s)| \mathrm{d}s\leq \sum_{(t_1,t_2)\subset (0,1)\setminus H_{\lambda,r}^+}\int_{t_1}^{t_2}\bigl(|\phi'(s)|+|\phi_{\delta}'(s)|\bigr) \mathrm{d}s\\
		&\leq (1+(1+\delta)^2)\sum_{(t_1,t_2)\subset (0,1)\setminus H_{\lambda,r}^+}\int_{t_1}^{t_2}|\phi'(s)| \mathrm{d}s\\
		&\leq (1+(1+\delta)^2)\sum_{(t_1,t_2)\subset (0,1)\setminus H_{\lambda,r}}\int_{t_1}^{t_2}|\phi'(s)| \mathrm{d}s\leq C\alpha.\\
		\end{aligned}
$$

It remains to consider intervals $[t_1,t_2]$, $t_1,t_2\in F_{\delta}$, such that there is a $t\in(t_1,t_2)\cap H_{\lambda, r}^+$. The first step is to prove using \eqref{defdelta} that  $[t_1,t_2]$ is entirely contained in $[t-r, t+r]$. Using $t\in H_{\lambda, r}^+$ and definition of $H_{\lambda, r}^+$ we have that
$$
	|\phi(t+r) - \phi(t) - \phi'(t)r| = \Big|  \int_{t}^{t+r}\bigl(\phi'(s) - \phi'(t)\bigr)\mathrm{d}s  \Big| \leq \int_{t-r}^{t+r}|\phi'(s) - \phi'(t)|ds\leq \tfrac{1}{8}\lambda r 
$$
and since $|\phi'(t)r| \geq \lambda r$ we can use \eqref{defdelta} to obtain 
$$
	|\phi(t+r) - \phi(t) |  \geq \frac{7}{8}\lambda r \geq 7\delta > \diam K
$$
and so  $\phi(t+r)$ lies in a different rectangle $K$ of the grid $\G_{\delta}$ than does $\phi(t)$. The same holds for $\phi(t-r)$. In other words $[t_1,t_2]$ is entirely contained in $[t-r, t+r]$.

Call $\rho:=\max\{|t-t_1|,|t-t_2|\}< |t_1-t_2|$. Using $t\in H_{\lambda, r}^+$ and definition of $H_{\lambda, r}^+$ we estimate $\phi'$ as follows
$$
	\int_{t_1}^{t_2} |\phi'(s)-\phi'(t)|\; \mathrm{d}s \leq \int_{t-\rho}^{t+\rho} |\phi'(s)-\phi'(t)| \mathrm{d}s \leq 2\rho\frac{1}{8}\lambda=  2|t_2-t_1|\frac{1}{8}\lambda 
$$
and summing over these intervals $t_1,t_2\in F_{\delta}$ containing some $t\in H_{\lambda , r}^+$ we get
$$
\sum_{(t_1,t_2)\cap H_{\lambda , r}^+\neq \emptyset} \int_{t_1}^{t_2} |\phi'(s)-\phi'(t)|\; \mathrm{d}s \leq \frac{\lambda}{4} \leq \frac{\alpha}{4}. 
$$

Concerning $\phi_{\delta}'$ we have firstly that
\eqn{firstly}
$$
\Big| \frac{\phi(t_2) - \phi(t_1)}{t_2-t_1} - \phi'(t) \Big| \leq \oint_{t-\rho}^{t+\rho} |\phi'(s)-\phi'(t)| \mathrm{d}s \leq \frac{\lambda}{4} \leq \frac{\alpha}{4}.
$$
Our $\phi_{\delta}$ has constant speed parametrization and hence either for every $s\in(t_1,t_2)$ we have 
$$
\frac{\phi_{\delta}(t_2) - \phi_{\delta}(t_1)}{t_2-t_1} = \phi_{\delta}'(s)
$$
or we use generalized segment between $\phi_{\delta}(t_1)$ and $\phi_{\delta}(t_2)$ and then using Proposition \ref{EverythingIDo} we get 
$$
\Big| \frac{\phi_{\delta}(t_2) - \phi_{\delta}(t_1)}{t_2-t_1} - \phi_{\delta}'(s) \Big| \leq 
\xi \Big| \frac{\phi_{\delta}(t_2) - \phi_{\delta}(t_1)}{t_2-t_1}\Big|. 
$$
Using \eqref{phiclose} we thus obtain in both cases
\eqn{secondly}
$$
	\Big| \frac{\phi(t_2) - \phi(t_1)}{t_2-t_1} - \phi_{\delta}'(s) \Big| \leq \frac{8\epsilon}{t_2-t_1}+\xi \Big| \frac{\phi(t_2) - \phi(t_1)}{t_2-t_1}\Big| .
$$
Together with $t\in H_{\lambda, r}^+$, the definition of $H_{\lambda, r}^+$, \eqref{firstly}, \eqref{secondly}, $\epsilon<\frac{\delta}{\# F_{\delta}}$ and $\xi<\frac{\delta}{\# F_{\delta}}$ this gives
$$
\begin{aligned}
  \sum_{(t_1,t_2)\cap H_{\lambda , r}^+\neq \emptyset}\int_{t_1}^{t_2} |\phi'_{\delta}(s)-\phi'(s)|\; \mathrm{d}s
  \leq& \sum_{(t_1,t_2)\cap H_{\lambda , r}^+\neq \emptyset}\int_{t_1}^{t_2} |\phi'(t)-\phi'(s)|\; \mathrm{d}s+\\
  &+\sum_{(t_1,t_2)\cap H_{\lambda , r}^+\neq \emptyset}\int_{t_1}^{t_2} \Bigl|\phi'(t)-\frac{\phi(t_2) - \phi(t_1)}{t_2-t_1}\Bigr|\;\mathrm{d}s\\
  &+\sum_{(t_1,t_2)\cap H_{\lambda , r}^+\neq \emptyset}\int_{t_1}^{t_2} \Bigl|\phi_{\delta}'(s)-\frac{\phi(t_2) - \phi(t_1)}{t_2-t_1}\Bigr|\; \mathrm{d}s\\
  \leq &\sum_{(t_1,t_2)\cap H_{\lambda , r}^+\neq \emptyset}\frac{\lambda}{8}(t_2-t_1)+\sum_{(t_1,t_2)\cap H_{\lambda , r}^+\neq \emptyset}\frac{\alpha}{4}(t_2-t_1)\\
  &+\sum_{(t_1,t_2)\cap H_{\lambda , r}^+\neq \emptyset}\bigl(8\epsilon+\xi 2\|\phi\|_{L^\infty((0,1); \mathbb{R}^2)}\bigr)\\
	 \leq & \frac{\alpha}{8}+\frac{\alpha}{4}+8\delta+\delta 2\|\phi\|_{L^\infty((0,1); \mathbb{R}^2)}.\\
\end{aligned}	
$$
In combination with \eqref{Love} this implies that $\phi_{\delta} \to \phi$ in $W^{1,p}((0,1); \mathbb{R}^2)$ as  $\delta\to 0_+$. 
\end{proof}

{\bf Acknowledgment. }
    We would like to thank two anonymous reviewers for their careful reading of the manuscript and for many helpful remarks and suggestions.

\end{document}